\title{Stable determination of an immersed body \\ in a stationary Stokes fluid}
\author{ Andrea Ballerini \footnote{SISSA-ISAS, Via Beirut 2-4, 34151 Trieste, Italy. E-mail: balleand@sissa.it.}}
\date{}
\newcommand{\tmop}[1]{\ensuremath{\operatorname{#1}}}
\newcommand{\tmtextbf}[1]{{\bfseries{#1}}}
\newcommand{\tmtextit}[1]{{\itshape{#1}}}
\newtheorem{theorem}{Theorem}[section]
\newtheorem{lemma}[theorem]{Lemma}
\newtheorem{proposition}[theorem]{Proposition}
\newenvironment{definition}[1][Definition]{\begin{trivlist}
\item[\hskip \labelsep {\bfseries #1}]}{\end{trivlist}}
\newenvironment{remark}[1][Remark]{\begin{trivlist}
\item[\hskip \labelsep {\bfseries #1}]}{\end{trivlist}}
\newcommand{\dive}{\mathrm{div} {\hspace{0.25em}}}
\newcommand{\elledue}[1]{{\bf L}^2({#1})}
\newcommand{\accauno}[1]{{\bf H}^1({#1})}
\newcommand{\accan}[2]{{\bf H}^{#1}({#2})}
\newcommand{\ide}{{\hspace{0.25em}}\mathbb{I}}
\newcommand{\til}[1]{\widetilde{#1}}
\newcommand{\omegad}{{\Omega} {\setminus} \overline{D}}
\newcommand{\norma}[3]{\|#1\|_{\accan{#2}{#3}}}
\newcommand{\normadue}[2]{\|#1\|_{\elledue{#2}}}
\newcommand{\accano}[2]{{\bf H}^{#1}_0({#2})}
\newcommand{\nor}[2]{ \|#1 \|_{#2} }
\numberwithin{equation}{section}
\begin{document}

\maketitle
\begin{abstract} 
We consider the inverse problem of the detection of a single body, immersed in a bounded container filled with a fluid which obeys the Stokes equations, from a single measurement of force and velocity on a portion of the boundary. We obtain an estimate of stability of log-log type.
\end{abstract}
{\bf Mathematical Subject Classification (2010):} Primary 35R30. Secondary 35Q35, 76D07, 74F10. \\
{\bf Keywords:} Cauchy problem, inverse problems, Stokes system, stability estimates.

\section{Introduction.}
In this paper we deal with an inverse problem associated to the Stokes system. We consider $\Omega \subset \mathbb{R}^n$, with $n=2,3$, with a sufficiently smooth boundary $\partial \Omega$. We want to detect an object $D$ immersed in this container, by collecting measurements of the velocity of the fluid motion and of the boundary forces, but we only have access to a portion $\Gamma$ of the boundary $\partial \Omega$.
The fluid obeys the Stokes system in $\omegad$:
\begin{equation}
  \label{NSE} \left\{ \begin{array}{rl}
    \dive\sigma(u,p) &= 0 \hspace{2em} \mathrm{\tmop{in}} \hspace{1em}
    \omegad,\\
    \dive u & = 0 \hspace{2em} \mathrm{\tmop{in}} \hspace{1em} \omegad,\\
    u & = g \hspace{2em} \mathrm{\tmop{on}} \hspace{1em} \Gamma,\\
    u & = 0 \hspace{2em} \mathrm{\tmop{on}} \hspace{1em} \partial D.
  \end{array} \right.
\end{equation}
Here, \begin{displaymath} \sigma (u, p) =  \mu ( \nabla u + \nabla u ^T )  -  p \ide   \end{displaymath} is the \tmtextit{stress tensor}, where $\ide$ denotes the $n \times n$ identity matrix, and $\mu$ is the viscosity function. The last request in (\ref{NSE}) is the so called ``no-slip condition''. We will always assume constant viscosity, $\mu(x)=1$, for all $x \in \omegad$. We observe that if $(u,p) \in \accauno{\omegad} \times L^2(\omegad)$ solves (\ref{NSE}), then it also satisfies \begin{displaymath}
\triangle u -\nabla p=0.
\end{displaymath} 
Call $\nu$ the outer normal vector field to $\partial \Omega$.
The ideal experiment we perform is to assign $g \in \accan{\frac{3}{2}}{\Gamma}$ and measure on $\Gamma$ the normal component of the stress tensor it induces, \begin{equation}\label{psi}\sigma (u, p) \cdot \nu = \psi, \end{equation} 
and try to recover $D$ from a single pair of Cauchy data $(g, \psi)$ known on the accessible part of the boundary $\Gamma$. Under the hypothesis of $\partial \Omega$ being of Lipschitz class, the uniqueness for this inverse problem has been shown to hold (see \cite{ConcOrtega2}) by means of unique continuation techniques.  For a different inverse problem regarding uniqueness of the viscosity function $\mu$, an analogous uniqueness result has been shown to hold, under some regularity assumptions (see \cite{HXW}). \\
The stability issue, however, remains largely an open question. There are some partial "directional stability" type result, given in \cite{ConcOrtega} and \cite{ConcOrtega2}. This type of result, however, would not guarantee an a priori uniform stability estimate for the distance between two domains that yield boundary measurement that are close to each other. In the general case, even if we add some a priori information on the regularity of the unknown domain, we can only obtain a weak rate of stability. This does not come unexpected since, even for a much simpler system of the same kind, the dependence of $D$ from the Cauchy data is at most of logarithmic type. See, for example, \cite{ABRV} for a similar problem on electric conductivity, or \cite{MRC}, \cite{MR} for an inverse problem regarding elasticity. 
The purpose of this paper is thus to prove a log-log type stability for the Hausdorff distance between the boundaries of the inclusions, assuming they have $C^{2,\alpha}$ regularity.  Such estimates have been estabilished for various kinds of elliptic equations, for example, \cite{ABRV}, \cite{AlRon}, for the electric conductivity equation, \cite{MRC} and \cite{MR} for the elasticity system and the detection of cavities or rigid inclusions. For the latter case, the optimal rate of convergence is known to be of log type, as several counterexamples (see \cite{Aless1} and \cite{DiCriRo})  show.
The main tool used to prove stability here and in the aforementioned papers (\cite{ABRV}, \cite{MRC}, \cite{MR}) is essentially a quantitative estimate of continuation from boundary data, in the interior and in the boundary, in the form of a three spheres inequality, see Theorem \ref{teotresfere}, and its main consequences. However, while in \cite{ABRV} the estimates are of log type for a scalar equation, here, and in \cite{MRC} and \cite{MR}, only an estimate of log-log type could be obtained for a system of equations. The reason for this is that, at the present time, no doubling inequalities at the boundary for systems are available, while on the other hand they are known to hold in the scalar case. \\
The basic steps of the present paper closely follows \cite{MRC}, \cite{MR}, and are the following: \begin{enumerate} \item {\it An estimate of propagation of smallness from the interior}. The proof of this estimate relies essentially on the three spheres inequality for solutions of the bilaplacian system. Since both the Lam\'e system and the Stokes system can be represented as solutions of such equations (at least locally and in the weak sense, see \cite{GAES} for a derivation of this for the elasticity system), we expected the same type of result to hold for both cases. 
  \item {\it A stability estimate of continuation from the Cauchy data}. This result also relies heavily on the three spheres inequality, but in order to obtain a useful estimate of continuation near the boundary, we need to extend a given solution of the Stokes equation a little outside the domain, so that the extended solution solves a similar system of equation. Once the solution has been properly extended, we may apply the stability estimates from the interior to the extended solution and treat them like estimates near the boundary for the original solution. \item{\it An extension lemma for solutions to the Stokes equations}. This step requires finding appropriate conditions on the velocity field $u$ as well as for the pressure $p$ at the same time, in order for the boundary conditions to make sense. In Section 5 we build such an extension. We point out that, if we were to study the inverse problem in which we assign the normal component $\psi$ of the stress tensor and measure the velocity $g$ induced on the accessible part of the boundary, the construction we mentioned would fail to work.

\end{enumerate} 
The paper is structured as follows. In Section 2, we state the apriori hypotheses we will need throughout the paper, and state the main result, Theorem \ref{principale}. In Section 3 we state the estimates of continuation from the interior we need, Propositions \ref{teoPOS}, \ref{teoPOSC}, and Propositions \ref{teostabest} and \ref{teostabestimpr} which deal, in turn, with the  stability estimates of continuation from Cauchy data and a better version of the latter under some additional regularity hypotheses, and we use them for the proof of Theorem \ref{principale}.
In section 4, we prove Proposition \ref{teoPOS} and \ref{teoPOSC} using the three spheres inequality, Theorem \ref{teotresfere}. Section 5 is devoted to the proof of Proposition \ref{teostabest}, which will use an extension argument, Proposition \ref{teoextensionNSE}, which will in turn be proven in Section 6.

\section{The stability result.}
\subsection{Notations and definitions.}

Let $x\in \mathbb{R}^n$. We will denote by $ B_{\rho}(x)$ the ball in $\mathbb{R}^n$ centered in $x$ of radius $\rho$. We will indicate  $x = (x_1, \dots ,x_n) $ as $x= (x^\prime, x_n)$ where $x^\prime = (x_1 \dots x_{n-1})$. Accordingly, $B^\prime_{ \rho}(x^\prime)$ will denote the ball of center $x^\prime$ and radius $\rho$ in $\mathbb{R}^{n-1}$. 
We will often make use of the following definition of regularity of a domain.
\begin{definition} 
Let $\Omega \subset \mathbb{R}^n$ a bounded domain. We say $\Gamma \subset \partial \Omega$  is of class $C^{k, \alpha}$  with constants $\rho_0$, $M_0 >0$, where $k$ is a nonnegative integer, $\alpha \in [ 0,1 )$ if, for any $P \in \Gamma$ there exists a rigid transformation of coordinates in which $P = 0$ and  
\begin{equation} \label{regolarita}
\Omega \cap B_{\rho_0}(0) = \{ (x^\prime, x_n) \in  B_{\rho_0}(0) \, \, \mathrm{s.t. } \, \, x_n > \varphi (x^\prime)\},
\end{equation}
where $\varphi$ is a real valued function of class $C^{k, \alpha}(B^\prime_{\rho_0}(0))$ such that \begin{displaymath} \begin{split}  \varphi(0)&=0, \\ \nabla\varphi(0)&=0, \text{  if  } k \ge 1 \\ \| \varphi\|_{C^{k, \alpha}(B^\prime_{\rho_0}(0))} &\le M_0 \rho_0. 
\end{split}
\end{displaymath}
\end{definition}
When $k=0$, $\alpha=1$ we will say that $\Gamma$ is {\it of Lipschitz class with constants $\rho_0$, $M_0$}.

\begin{remark} We normalize all norms in such a way they are all dimensionally equivalent to their argument and coincide with the usual norms when $\rho_0=1$. In this setup, the norm taken in the previous definition is intended as follows:
\begin{displaymath}
\| \varphi\|_{C^{k, \alpha}(B^\prime_{\rho_0}(0))} = \sum_{i=0}^{k} \rho_0^i \| D^i \varphi\|_{L^{\infty}(B^\prime_{\rho_0}(0))} + \rho_0^{k+\alpha}  | D^k \varphi |_{\alpha,B^\prime_{\rho_0}(0) },
\end{displaymath}
where $| \cdot |$ represents the $\alpha$-H\"older seminorm 

\begin{displaymath}
| D^k \varphi |_{\alpha,B^\prime_{\rho_0}(0) } = \sup_{x^\prime, y^\prime \in B^\prime_{\rho_0}(0), x^\prime \neq y^\prime }  \frac{| D^k \varphi(x^\prime)-D^k \varphi(y^\prime)| }{|x^\prime -y^\prime|^\alpha},
\end{displaymath}
and $D^k \varphi=\{ D^\beta\varphi\}_{|\beta|= k}$ is the set of derivatives of order $k$.
Similarly we set 
\begin{displaymath}
\normadue{u}{\Omega}^2 = \frac{1}{\rho_0^n}   \int_\Omega u^2 \,
\end{displaymath}

\begin{displaymath}
\norma{u}{1}{\Omega}^2 = \frac{1}{\rho_0^n} \Big( \int_\Omega u^2 +\rho_0^2 \int_\Omega |\nabla u|^2 \Big).
\end{displaymath}
The same goes for the trace norms $\norma{u}{\frac{1}{2}}{\partial \Omega}$ and the dual norms $\norma{u}{-1}{\Omega}$, $\norma{u}{-\frac{1}{2}}{\partial \Omega}$ and so forth. 
\end{remark}
We will sometimes use the following notation, for $h>0$:
\begin{displaymath}
\Omega_h  = \{ x \in \Omega \, \, \mathrm{such \, \, that } \, \, d(x, \partial \Omega) > h  \}.
\end{displaymath}

\subsection{A priori information.}
Here we present all the a priori hypotheses we will make all along the paper. \\
(1) {\it A priori information on the domain.}  \\
We assume $\Omega \subset \mathbb{R}^n$ to be a bounded domain, such that 
\begin{equation} \label{apriori0}
 \partial \Omega \text{  is connected,  } 
\end{equation}
and it has a sufficiently smooth boundary, i.e., 
\begin{equation} \label{apriori1}
\partial \Omega \text{ is of class } C^{2, \alpha} \text{ of constants } \rho_0, \, \, M_0, \end{equation} where $\alpha \in (0,1]$ is a real number, $M_0 > 0$, and $\rho_0 >0 $ is what we shall treat as our dimensional parameter. In what follows $\nu$ is the outer normal vector field to  $\partial \Omega$. We also require that 
\begin{equation} \label{apriori2} |\Omega| \le M_1 \rho_0^n, \end{equation} where $M_1 > 0$. \\
In our setup, we choose a special open and connected portion $\Gamma \subset \partial \Omega$ as being the accessible part of the boundary, where, ideally, all measurements are taken. We assume that there exists a point $P_0 \in \Gamma$ such that 
\begin{equation} \label{apriori2G}
\partial \Omega \cap B_{\rho_0}(P_0) \subset \Gamma.
\end{equation} 

(2) { \it A priori information about the obstacles.} \\
We consider $D \subset \Omega$, which represents the obstacle we want to detect from the boundary measurements, on which we require that 
\begin{equation} \label{apriori2bis}
\Omega \setminus \overline{D}  \text{ is connected, } 
\end{equation}
\begin{equation} \label{apriori2ter}
\partial D \text{ is connected. }
\end{equation}
We require the same regularity on $D$ as we did for $\Omega$, that is, 
\begin{equation} \label{apriori3} \partial D \text{ is of class } C^{2, \alpha} \text{ with constants }    \rho_0 , \, M_0.  \end{equation} 
In addition, we suppose that the obstacle is "well contained" in $\Omega$, meaning \begin{equation} \label{apriori4} d (D, \partial \Omega) \ge \rho_0. \end{equation}
\begin{remark}
We point out that, in principle, assumptions (\ref{apriori1}), (\ref{apriori3}) and (\ref{apriori4}) could hold for different values of $\rho_0$. If that were the case, it would be sufficient to redefine $\rho_0$ as the minimum among the three constants; then (\ref{apriori1}), (\ref{apriori2}) and (\ref{apriori3}) would still be true with the same $\rho_0$, while we would need to assume a different value of the constant $M_1$ in (\ref{apriori2}) accordingly. As a simple example, if $\Omega = B_1(0)$, and $D=B_{1/2}(0)$, then (\ref{apriori1}) is true for every $\rho_0 <1$, while (\ref{apriori3}) and (\ref{apriori4}) is true for all $\rho_0 <1/2$, so $\rho_0$ would be assumed to be less than $1/2$.  
\end{remark}
(3) { \it A priori information about the boundary data.} \\
For the Dirichlet-type data $g$ we assign on the accessible portion of the boundary $\Gamma$, we assume that
\begin{equation} \begin{split} \label{apriori5}
g \in \accan{\frac{3}{2}}{\partial \Omega}, \, \; \; g \not \equiv 0,  \\ \mathrm{supp} \,g \subset \subset \Gamma.
\end{split} \end{equation} 
As it is required in order to ensure the existence of a solution, we also require
\begin{equation} \label{aprioriexist} \int_{\partial \Omega}  g \, \mathrm{d} s =0.
\end{equation}
We also ask that, for a given constant $F>0$, we have 
\begin{equation} \label{apriori7}
\frac{\norma{g}{\frac{1}{2}}{\Gamma} }{\normadue{g}{\Gamma} } \le F.
\end{equation}
Under the above conditions on $g$, one can prove that there exists a constant $c>0$, only depending on $M_0$, such that the following equivalence relation holds: \begin{equation} \label{equivalence} 
\norma{g}{\frac{1}{2}}{\Gamma} \le \norma{g}{\frac{1}{2}}{\partial \Omega} \le c \norma{g}{\frac{1}{2}}{\Gamma}.
\end{equation}

\subsection{The main result.}
 Let $\Omega \subset \mathbb{R}^n$, and $\Gamma \subset \partial \Omega$ satisfy (\ref{apriori1})-(\ref{apriori2G}). Let $D_i \subset \Omega$, for $i=1,2$, satisfy (\ref{apriori2bis})-(\ref{apriori4}), and let us denote by $\Omega_i= \Omega \setminus \overline{D_i}$. We may state the main result as follows.
\begin{theorem}[Stability] \label{principale} Let $g \in \accan{\frac{3}{2}}{\Gamma}$ be the assigned boundary data, satisfying (\ref{apriori5})-(\ref{apriori7}). Let $u_i \in \accauno{\Omega_i}$ solve (\ref{NSE}) for $D=D_i$. If, for $\epsilon > 0 $, we have 
\begin{equation} \label{HpPiccolo}
\rho_0 \norma{\sigma(u_1, p_1)\cdot \nu  -\sigma(u_2,p_2) \cdot \nu }{-\frac{1}{2}}{\Gamma} \le \epsilon, \end{equation}
then 
\begin{equation}\label{stimstab} 
d_{\mathcal{H}} (\partial D_1, \partial D_2) \le \rho_0 \omega \Bigg( \frac{\epsilon}{\norma{g}{\frac{1}{2}}{\Gamma}}\Bigg),
\end{equation}
where $\omega : (0, +\infty) \to \mathbb{R}^+$ is an increasing function satisfying, for all $0<t<\frac{1}{e}$:
\begin{equation}
\omega(t) \le C (\log | \log t |)^{-\beta   }.
\end{equation}
The constants $C>0$ and $0<\beta<1$ only depend on $n$, $M_0$, $M_1$ and $F$.
\end{theorem}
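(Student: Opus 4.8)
The plan is to follow the scheme announced in the introduction, assembling the four auxiliary results into a single chain of inequalities and then optimizing a free parameter. Write $w=u_1-u_2$ and let $G$ be the connected component of $\Omega\setminus(\overline{D_1}\cup\overline{D_2})$ whose closure contains $\Gamma$. By linearity of the Stokes system, $w$ (with pressure $p_1-p_2$) solves it in $G$; moreover $w=g-g=0$ on $\Gamma$, and by hypothesis (\ref{HpPiccolo}) its normal stress $\sigma(w,p_1-p_2)\cdot\nu=\psi_1-\psi_2$ has $\norma{\cdot}{-\frac{1}{2}}{\Gamma}$ of order $\epsilon/\rho_0$. Thus $w$ has small Cauchy data on $\Gamma$, and the whole point is to turn this into an upper bound on the Hausdorff distance.

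\emph{Step 1 (smallness of $w$ in the interior).} First I would invoke the stability estimate of continuation from the Cauchy data, Proposition~\ref{teostabest} (in its refined form, Proposition~\ref{teostabestimpr}, valid under the $C^{2,\alpha}$ hypothesis), which itself rests on the extension Proposition~\ref{teoextensionNSE} and the three spheres inequality Theorem~\ref{teotresfere}, to deduce that $w$ is small in $L^2$ on a ball contained in $G$ at a controlled distance from $\partial\Omega$ near $P_0$. I would then spread this smallness into the bulk of $G$ by the interior estimate Proposition~\ref{teoPOS}: iterating the three spheres inequality along a chain of balls in $G$ (whose number is controlled by $M_0,M_1$ and by the distance to the obstacles, using that the components of $w$ are biharmonic) yields, at each interior point $x$, a bound $\int_{B_r(x)}|\nabla w|^2\le C\,\norma{g}{\frac{1}{2}}{\Gamma}^2\,\eta^{\gamma(x)}$, where $\eta=\omega_1\!\big(\epsilon/\norma{g}{\frac{1}{2}}{\Gamma}\big)$ is of logarithmic type and $\gamma(x)\in(0,1)$.

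\emph{Step 2 (the extremal point and transfer across the obstacle).} Let $d=d_{\mathcal{H}}(\partial D_1,\partial D_2)$; the interesting regime is $d\le c\rho_0$, the complementary one being trivial after adjusting $C$ since $d\le\operatorname{diam}\Omega\le C\rho_0$ by the a priori bounds. Up to exchanging the indices, I may take $\bar x\in\partial D_1$ with $\operatorname{dist}(\bar x,D_2)=d$. Since $D_2$ is at distance $d$ and $D_1$ has the interior ball property (from (\ref{apriori3})), there is a ball $B_{\theta d}(y_0)\subset D_1\cap\Omega_2$, on which $u_2$ solves the Stokes system while $u_1$ is not even defined, and a ball in $G$ abutting $\partial D_1$ near $\bar x$. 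The key—and most delicate—step is to show that $u_2$ is small on $B_{\theta d}(y_0)$. Because $u_1=0$ on $\partial D_1$ and $u_1$ is controlled by its energy through global $C^{1,\alpha}$ regularity, $u_1$ is small in a thin layer adjacent to $\partial D_1$; combined with Step 1 this forces $u_2=u_1-w$ to be small on a small ball next to $\bar x$. As $u_2$ solves the Stokes system across $\partial D_1$ in all of $B_d(\bar x)$ (which is free of $D_2$), I would propagate this smallness by the three spheres inequality, through a chain joining that small ball to $B_{\theta d}(y_0)$, obtaining $\int_{B_{\theta d}(y_0)}|\nabla u_2|^2\le C\,\norma{g}{\frac{1}{2}}{\Gamma}^2\,\eta^{\gamma'}$ for some $\gamma'\in(0,1)$.

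\emph{Step 3 (lower bound and conclusion).} Against this I place the lower bound furnished by the Lipschitz propagation of smallness, Proposition~\ref{teoPOSC}, applied to $u_2$ in the interior ball $B_{\theta d}(y_0)\subset\Omega_2$, namely $\int_{B_{\theta d}(y_0)}|\nabla u_2|^2\ge\exp\!\big(-A(\rho_0/d)^B\big)\int_{\Omega_2}|\nabla u_2|^2$, together with the energy lower bound $\int_{\Omega_2}|\nabla u_2|^2\ge c\,\norma{g}{\frac{1}{2}}{\Gamma}^2$ coming from the trace theorem and the Poincaré inequality (using $g\not\equiv0$, the no-slip condition on $\partial D_2$, and the normalization (\ref{apriori7})). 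Comparing the two estimates gives $\exp\!\big(-A(\rho_0/d)^B\big)\le C\eta^{\gamma'}$, whence $d\le\rho_0\big(A/\log(C^{-1}\eta^{-\gamma'})\big)^{1/B}$. Since $\eta$ is already logarithmic in $\epsilon/\norma{g}{\frac{1}{2}}{\Gamma}$, the double logarithm appears and produces (\ref{stimstab}) with modulus $\omega(t)\le C(\log|\log t|)^{-\beta}$. I expect the main obstacle to be Step 2: $u_1$, and hence $u_2$, is generically of full size at distance $\sim d$ from $\partial D_1$, so the smallness can be harvested only in the thin layer where $u_1$ nearly vanishes and must then be carried across $\partial D_1$ by three spheres; this is exactly where the absence of boundary doubling inequalities for the Stokes system degrades the single logarithm of the scalar case to the log-log rate, and tracking how the radii and the lengths of the chains depend on $d$ (before optimizing) is the quantitative heart of the proof.
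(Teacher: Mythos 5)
Your overall skeleton is the paper's: an upper bound on the energy of $u_i$ over the symmetric-difference region coming from continuation from the Cauchy data, played against the lower bound of Proposition~\ref{teoPOSC} on a ball of radius comparable to $d=d_{\mathcal{H}}(\partial D_1,\partial D_2)$ contained in that region, and then an inversion of $\exp\bigl[-A(\rho_0/d)^B\bigr]\le C\,\omega(\tilde\epsilon)$. However, there is a genuine gap in how you obtain the upper bound with a \emph{single-log} modulus. You invoke Proposition~\ref{teostabestimpr} as if it were ``valid under the $C^{2,\alpha}$ hypothesis''; it is not. That proposition carries the extra assumption that the boundary of $G$, the relevant connected component of $\Omega_1\cap\Omega_2$, is Lipschitz with controlled constants $\tilde\rho_0,\tilde M_0$ -- and the intersection of two $C^{2,\alpha}$ domains can develop cusps, so this does not follow from the a priori data. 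The paper secures it by a bootstrap: first run the comparison with the unconditional Proposition~\ref{teostabest}, whose modulus is $(\log|\log t|)^{-c}$, obtaining the weaker log-log-log bound (\ref{logloglog}); this shows $d\le d_0$ once $\epsilon\le\epsilon_0$, so Proposition~\ref{teoreggra} applies and guarantees the Lipschitz regularity of $G$; only then is Proposition~\ref{teostabestimpr}, with modulus $|\log t|^{-\gamma}$, legitimately available, and a second pass through the same comparison yields the claimed $(\log|\log t|)^{-\beta}$. Without this two-step argument your chain either rests on an unjustified regularity assumption or, if you fall back on Proposition~\ref{teostabest} alone, delivers only a triple logarithm, which is strictly weaker than (\ref{stimstab}).

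Two secondary points. First, your Step 2 is redundant and introduces unnecessary risk: Proposition~\ref{teostabest} (and its improved version) already bounds $\int_{D_1\setminus D_2}|\nabla u_2|^2$, hence in particular the energy on $B_{\theta d}(y_0)\subset D_1\setminus D_2$, so there is no need to re-propagate smallness of $u_2$ across $\partial D_1$ by three spheres; moreover the paper's mechanism for that transfer is different (an integration by parts over a regularized domain, using the no-slip condition and the $C^{1,\alpha}$ bounds of Lemma~\ref{teoschauder}, rather than a three-spheres chain for $u_2$ inside $B_d(\bar x)$), and your version would require its own careful bookkeeping of radii versus $d$. Second, your case analysis is incomplete: choosing $\bar x\in\partial D_1$ with $\operatorname{dist}(\bar x,D_2)=d$ silently assumes the extremal boundary point lies outside $D_2$; the paper must also treat the case $B_d(x_1)\subset D_2$, where the test ball sits in $D_2\setminus D_1$ and the roles of $u_1$ and $u_2$ are exchanged in a way that is not captured by merely swapping indices in your formulation.
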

\subsection{The Helmholtz-Weyl decomposition.}
We find it convenient to recall a classical result which will come in handy later on. A basic tool in the study of the Stokes equations (\ref{NSE}) is the Helmholtz-Weyl decomposition of the space $\elledue{\Omega}$ in two
orthogonal spaces:
\begin{equation}\label{HW} \elledue{\Omega} = H \oplus H^{\perp}, \end{equation}
where
\[ H =\{u \in \elledue{\Omega} \hspace{0.25em} : \dive u = 0, \hspace{0.25em}
   u|_{\partial \Omega} = 0\} \]
and
\[ H^{\perp} =\{u \in \elledue{\Omega} \hspace{0.25em} : \exists \hspace{0.25em} p \in \accan{1}{\Omega} \,: \;  u = \nabla p \hspace{0.25em} \}. \]
This decomposition is used, for example, to prove the existence of a solution of the Stokes system (among many others, see \cite{LadyK}).

From this, and using a quite standard "energy estimate" reasoning, one can prove the following (see \cite{LadyK} or \cite{Temam}, among many others):
\begin{theorem}[Regularity for the direct Stokes problem.] \label{TeoRegGen} 
Let $m \ge -1$ an integer number and let $E \subset \mathbb{R}^n$ be a bounded domain of class $C^r$ , with $r= \max \{ m+2, 2\}$. Let us consider the following problem:
\begin{equation}
 \label{NSEdiretto} \left\{ \begin{array}{rl}
    \dive \sigma(u,p) & = f \hspace{2em} \mathrm{\tmop{in}} \hspace{1em}
    E,\\
    \dive u & = 0 \hspace{2em} \mathrm{\tmop{in}} \hspace{1em} E, \\
    u & = g \hspace{2em} \mathrm{\tmop{on}} \hspace{1em} \partial    E,\\
  \end{array} \right.
\end{equation}
where $f \in \mathbf{H}^{m} (E)$ and $g \in \accan{m+\frac{3}{2}}{E}$. Then there exists a weak solution $(u,p) \in \mathbf{H}^{m+2}(E) \times H^{m+1} (E)$  and a constant $c_0$, only depending on the regularity constants of $E$ such that 
 \begin{equation}  \label{stimanormadiretto}
 \| u \|_{\mathbf{H}^{m+2} (E)} + \rho_0 \| p-p_E \|_{H^{m+1}(E)} \le   c_0  \big(\rho_0 \| f \|_{\mathbf{H}^{m} (E)} + \| g \|_{\mathbf{H}^{m+\frac{3}{2}}  (\partial E)} \big),
\end{equation}
where $p_E$ denotes the average of $p$ in $E$, $p_E = \frac{1}{|E|} \int_E p . $
\end{theorem}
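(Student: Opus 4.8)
The plan is to first establish existence of a weak pair $(u,p)$ and then bootstrap its regularity, keeping every estimate dimensionally consistent in the weighted norms of the preceding Remark. Since $\mu=1$ and $\dive u=0$, the identity $\dive\sigma(u,p)=\triangle u+\nabla(\dive u)-\nabla p=\triangle u-\nabla p$ shows that (\ref{NSEdiretto}) is equivalent to the system $-\triangle u+\nabla p=-f$, $\dive u=0$ in $E$, $u=g$ on $\partial E$, which is elliptic in the Agmon--Douglis--Nirenberg sense, the no-slip (Dirichlet) boundary condition satisfying the complementing condition.

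First I would reduce to homogeneous boundary data. By the trace theorem there is an extension $\til G\in\accan{m+2}{E}$ of $g$ with $\norma{\til G}{m+2}{E}\le C\norma{g}{m+\frac32}{\partial E}$; since solvability forces the compatibility condition $\int_{\partial E}g\cdot\nu=0$ (integrate $\dive u=0$), a Bogovskii-type correction $z\in\accano{m+2}{E}$ solving $\dive z=-\dive\til G$ yields a divergence-free lifting $G=\til G+z$ with the same bound. The field $w=u-G$ then solves the Stokes system with homogeneous data and right-hand side $\til f=f+\triangle G\in\accan{m}{E}$. Existence of $w$ in the space $H$ of (\ref{HW}) comes from testing the weak formulation against divergence-free fields, which annihilates the pressure, and applying Lax--Milgram using the Poincar\'e coercivity of the Dirichlet form on $H$; this gives the base energy estimate $\norma{w}{1}{E}\le C\rho_0\norma{\til f}{-1}{E}$. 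The pressure is recovered by the de Rham argument: the functional $v\mapsto(\nabla w,\nabla v)-\langle\til f,v\rangle$ vanishes on $H$, so by the decomposition (\ref{HW}) it is a gradient $\nabla p$ with $p\in L^2(E)$ unique up to a constant, which we normalize by subtracting $p_E$. The Ne\v{c}as inequality $\|p-p_E\|_{L^2(E)}\le C\rho_0^{-1}\norma{\nabla p}{-1}{E}$ then controls the pressure and establishes (\ref{stimanormadiretto}) in the base case $m=-1$.

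For $m\ge0$ I would argue by induction, promoting one derivative at a time with the interior and boundary elliptic estimates for the Stokes system. Interior regularity follows from the method of difference quotients applied to the weak formulation. For boundary regularity I would localize with a partition of unity, flatten $\partial E$ using the $C^r$ charts $\varphi$ of the Definition, and take tangential difference quotients to gain one tangential derivative; the missing normal derivative of $u$ and the derivatives of $p$ are then recovered algebraically from the equations $\triangle u-\nabla p=f$ and $\dive u=0$. Summing the localized estimates and reabsorbing the resulting lower-order terms into the left-hand side yields the full estimate at order $m$.

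The technical heart---and the step I expect to be the main obstacle---is this boundary estimate: because $(u,p)$ solves a coupled system rather than a scalar equation, one must verify the complementing condition and handle the variable coefficients produced by the flattening, bounding them solely in terms of the $C^r$ norm of $\varphi$, i.e. in terms of $\rho_0$ and $M_0$; this is exactly what makes $c_0$ depend only on the regularity constants of $E$. Keeping track of the $\rho_0$-weights from the Remark throughout is what gives the dimensionally balanced form of (\ref{stimanormadiretto}). As the result is classical, the complete argument can be found in \cite{LadyK} or \cite{Temam}, and I would present the above only as the guiding outline.
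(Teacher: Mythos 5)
Your outline is correct and is essentially the argument the paper itself relies on: the paper gives no proof of Theorem \ref{TeoRegGen}, presenting it as a classical consequence of the Helmholtz--Weyl decomposition and ``standard energy estimate reasoning'' with a citation to Ladyzhenskaya and Temam, and your divergence-free lifting, Lax--Milgram/de Rham existence step, and difference-quotient bootstrap are precisely the standard proof found in those references. The only cosmetic imprecision is that the variational problem should be posed on the divergence-free subspace of $\accano{1}{E}$ rather than on the $L^2$-space $H$ of (\ref{HW}), but this does not affect the argument.
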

Finally, we would like to recall the following version of Poincar\`e inequality, dealing with functions that vanish on an open portion of the boundary:
\begin{theorem}[Poincar\`e inequality.]
Let $E\subset \mathbb{R}^n$ be a bounded domain with boundary of Lipschitz class with constants $\rho_0$, $M_0$ and satisfying (\ref{apriori2}). Then for every $ u \in  \accan{1}{E}$ such that
\begin{displaymath} 
u = 0  \, \, \text{on}  \, \, \partial E \cap B_{\rho_0}(P), 
\end{displaymath}
where $P$ is some point in $\partial E$, we have
\begin{equation} \label{pancarre}
\normadue{u}{E} \le C \rho_0 \normadue{\nabla u}{E},
\end{equation}
where C is a positive constant only depending on $M_0$ and $M_1$.
\end{theorem}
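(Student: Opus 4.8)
The plan is to decompose $u$ into its mean value $u_E=\frac{1}{|E|}\int_E u$ and its oscillation $u-u_E$, to control the oscillation by a standard Poincar\'e--Wirtinger inequality, and to use the vanishing of $u$ on the boundary patch $\Sigma:=\partial E\cap B_{\rho_0}(P)$ \emph{only} to pin down the constant $u_E$. By the scaling $x\mapsto x/\rho_0$, under which the normalized norms of the Remark reduce to the usual ones, it is enough to think of the geometry at unit scale; I will nevertheless keep the dimensional factors explicit so that the final constant is seen to depend on $M_0,M_1$ alone.

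Concretely, I would invoke three facts. The first is the Poincar\'e--Wirtinger inequality on the connected Lipschitz domain $E$,
\[
\int_E |u-u_E|^2 \le C_P^2\,\rho_0^2 \int_E |\nabla u|^2 ,
\]
with $C_P=C_P(M_0,M_1)$. The second is the (scaled) trace inequality, valid for every $v\in\accan{1}{E}$,
\[
\int_{\partial E} |v|^2\, ds \le C\Big(\rho_0^{-1}\int_E |v|^2 + \rho_0\int_E |\nabla v|^2\Big),\qquad C=C(M_0).
\]
The third is the elementary lower bound $|\Sigma|\ge c(M_0)\,\rho_0^{n-1}$, which holds because $\partial E$ is a Lipschitz graph of constant $M_0$ near $P$, so the portion of that graph lying inside $B_{\rho_0}(P)$ has surface measure comparable to $\rho_0^{n-1}$. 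With these in hand, since $u=0$ on $\Sigma$ we have $u-u_E=-u_E$ there, whence $|\Sigma|\,|u_E|^2=\int_\Sigma |u-u_E|^2\le \int_{\partial E}|u-u_E|^2\,ds$. Applying the trace inequality to $v=u-u_E$, using $\nabla(u-u_E)=\nabla u$ and then the first fact on the resulting interior term gives $|\Sigma|\,|u_E|^2\le C\,\rho_0\int_E|\nabla u|^2$. Dividing by $|\Sigma|$ and multiplying by $|E|\le M_1\rho_0^n$ yields $\int_E |u_E|^2=|E|\,|u_E|^2\le C\,\rho_0^2\int_E|\nabla u|^2$. The triangle inequality $\normadue{u}{E}\le \normadue{u-u_E}{E}+\normadue{u_E}{E}$ combined with the Poincar\'e--Wirtinger bound then produces (\ref{pancarre}).

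The main obstacle is the first fact: one needs the Poincar\'e--Wirtinger constant $C_P$ to be \emph{uniform} over the entire class of admissible domains (Lipschitz with constants $\rho_0,M_0$ and volume at most $M_1\rho_0^n$), not merely finite for the given $E$. This uniformity is precisely what makes hypothesis (\ref{apriori2}) and the Lipschitz bound $M_0$ enter the final constant, and I would establish it by a compactness--contradiction argument: were it false, there would exist domains $E_k$ in the class and functions $u_k$ with $\int_{E_k}|\nabla u_k|^2\to 0$ while $\normadue{u_k-(u_k)_{E_k}}{E_k}=1$; using $\accan{1}{}$-extension operators with bounds uniform over the class together with Rellich compactness on a fixed large ball, one extracts a limit that is a nonzero constant with vanishing gradient and zero mean, a contradiction. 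Once this uniform constant and the surface-measure lower bound on $\Sigma$ are secured, the remaining steps are routine.
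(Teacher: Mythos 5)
The paper never proves this theorem: it is stated as a recalled classical fact (of the type established in \cite{ARRV}), so there is no in-text argument to measure yours against. On its own merits, your skeleton is the standard and correct one: decompose $u=u_E+(u-u_E)$, control the oscillation by Poincar\'e--Wirtinger, and pin down the constant via the trace inequality together with the surface-measure lower bound $|\Sigma|\ge c(M_0)\rho_0^{n-1}$; the dimensional bookkeeping that converts $|u_E|^2\le C\rho_0^{2-n}\int_E|\nabla u|^2$ into $\normadue{u_E}{E}\le C\rho_0\normadue{\nabla u}{E}$ via $|E|\le M_1\rho_0^n$ is right, and you correctly isolate the uniformity of the Poincar\'e--Wirtinger constant as the only nontrivial point.

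Two remarks on that point. Your compactness--contradiction argument, as sketched, is missing two ingredients. First, to put all competitors inside ``a fixed large ball'' you need the uniform diameter bound $\mathrm{diam}(E)\le C(M_0,M_1)\rho_0$; this is true (each point of $E$ carries a cone of volume $c(M_0)\rho_0^n$ inside $E$, so (\ref{apriori2}) bounds the cardinality of a $\rho_0$-net of the connected set $E$), but it must be said, since without (\ref{apriori2}) the class contains arbitrarily long tubes and the uniform constant fails. Second, since the domains $E_k$ vary, extracting a limit of the extended functions on a fixed ball is not enough: you must also extract a limit domain $E$ with $|E_k\triangle E|\to 0$ and with compact subsets of $E$ eventually contained in $E_k$, so that the zero-mean and unit-norm normalizations and the vanishing of the gradient pass to the limit; for uniformly Lipschitz domains this follows from Ascoli--Arzel\`a on the local graph functions, but it is a genuine step, not a footnote. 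Alternatively --- and this is what the literature the paper relies on actually does --- one can argue constructively: first prove the inequality on the graph neighbourhood $E\cap B_{\rho_0}(P)$ by integrating along vertical segments emanating from the portion of the graph where $u$ vanishes, then propagate to all of $E$ by a chain of overlapping balls whose number is controlled by $M_0$ and $M_1$, using only the explicit Poincar\'e--Wirtinger inequality on balls. That route avoids compactness entirely and yields a computable constant. With either the two missing ingredients supplied or the constructive chain substituted for your ``first fact,'' your proof is complete.
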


\section{Proof of Theorem \ref{principale}.}
The proof of Theorem \ref{principale} relies on the following sequence of propositions.
\begin{proposition}[Lipschitz propagation of smallness] 
\label{teoPOS}
Let $E$ be a bounded Lipschitz domain with constants $\rho_0$, $M_0$, satisfying (\ref{apriori2}). 
Let $u$ be a solution to the following problem:
\begin{equation}
  \label{NSEPOS} \left\{ \begin{array}{rl}
    \dive\sigma(u,p) &= 0 \hspace{2em} \mathrm{\tmop{in}} \hspace{1em}
    E,\\
    \dive u & = 0 \hspace{2em} \mathrm{\tmop{in}} \hspace{1em} E,\\
    u & = g \hspace{2em} \mathrm{\tmop{on}} \hspace{1em} \partial E,\\
  \end{array} \right.
\end{equation}
where $g$ satisfies 
\begin{equation} \label{apriori5POS}
g \in \accan{\frac{3}{2}}{\partial E}, \, \; \; g \not \equiv 0, 
\end{equation} 
\begin{equation} \label{aprioriexistPOS} \int_{\partial E} g \,  \mathrm{d} s =0,
\end{equation}
\begin{equation} \label{apriori7POS}
\frac{\norma{g}{\frac{1}{2}}{\partial E} }{\normadue{g}{\partial E} } \le F,
\end{equation}
for a given constant $F>0$. Also suppose that there exists a point $P \in \partial E$ such that
\begin{equation} 
g = 0  \;\; \text{on}  \; \; \partial E \cap B_{\rho_0}(P).
\end{equation} Then there exists a constant $s>1$, depending only on $n$ and $M_0$ such that, for every $\rho >0$ and for every $\bar{x} \in E_{s\rho}$, we have 
\begin{equation} \label{POS}  \int_{B_{\rho}(\bar{x})} \! |\nabla u|^2 dx \ge C_\rho \int_{E} \! |\nabla u|^2 dx . \end{equation} 
Here $C_\rho>0$ is a constant depending only on  $n$, $M_0$, $M_1$, $F$, $\rho_0$ and $\rho$. The dependence of $C_\rho$ from $\rho$ and $\rho_0$ can be traced explicitly as 
\begin{equation} \label{crho} C_\rho = \frac{C}{\exp \Big[ A \big( \frac{\rho_0}{\rho}\big) ^B \Big] } \end{equation} where $A$, $B$, $C>0$ only depend on $n$, $M_0$, $M_1$ and $F$.
\end{proposition}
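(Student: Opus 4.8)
The plan is to derive \eqref{POS} from the three spheres inequality (Theorem \ref{teotresfere}) by a chain-of-balls argument. Since a Stokes pair $(u,p)$ satisfies, componentwise and locally, a bilaplacian-type system, Theorem \ref{teotresfere} applies to the Dirichlet energy $\int_{B_r(x)}|\nabla u|^2$, and I will use it throughout in the multiplicative form: for concentric balls $B_{r_1}(x)\subset B_{r_2}(x)\subset B_{r_3}(x)\subset E$ one has $\int_{B_{r_2}(x)}|\nabla u|^2\le C\big(\int_{B_{r_1}(x)}|\nabla u|^2\big)^{\theta}\big(\int_{B_{r_3}(x)}|\nabla u|^2\big)^{1-\theta}$, with $\theta\in(0,1)$ and $C$ depending only on $n$, $M_0$ and the radius ratios. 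After rescaling so that $\rho_0=1$ (all norms being dimensionally normalized), writing $M:=\int_E|\nabla u|^2$, the goal \eqref{POS} becomes the lower bound $\int_{B_\rho(\bar x)}|\nabla u|^2\ge C_\rho\,M$ for every $\bar x\in E_{s\rho}$.

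First I would establish a lower bound at the a priori scale: for a fixed interior point $y_0\in E_{\rho_0}$ one has $\int_{B_{\rho_0}(y_0)}|\nabla u|^2\ge\kappa\,M$, where $\kappa>0$ depends only on $n,M_0,M_1,F$ and, crucially, not on $\rho$. Covering $E$ by at most $\sim M_1$ balls of radius $\rho_0$ and averaging yields one ball carrying a fixed fraction of $M$; connectedness of $E$ together with the Lipschitz cone property then joins this ball to $B_{\rho_0}(y_0)$ through a chain of comparable balls whose number is bounded by a constant depending only on $M_0,M_1$, so that a bounded number of applications of Theorem \ref{teotresfere} propagates the lower bound with only a constant loss. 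The delicate point here is to exclude that all the energy hides in the boundary layer $E\setminus E_{\rho_0}$: this is where the hypotheses on $g$ enter. Using that $g$ vanishes on $\partial E\cap B_{\rho_0}(P)$, the Poincar\'e inequality \eqref{pancarre}, a Caccioppoli estimate, and the frequency-type bound \eqref{apriori7POS}, I would bound the boundary-layer energy by the interior energy plus controlled data terms, which forces a fixed fraction of $M$ to remain in $E_{\rho_0}$ and produces the $F$-dependence of the constant.

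The second step is the descent from scale $\rho_0$ to scale $\rho$. Fix $\bar x\in E_{s\rho}$. When $\bar x$ is genuinely interior, $d(\bar x,\partial E)\gtrsim 1$, I iterate Theorem \ref{teotresfere} at the fixed center $\bar x$ over the dyadic radii $\rho,2\rho,4\rho,\dots$ up to scale $1$; since there are only $\sim\log(1/\rho)$ scales, the composite exponent is $\theta^{K}$ with $K\sim\log(1/\rho)$, i.e. a power $\rho^{\sigma}$, and a single inversion against $M$ produces the factor $\exp[-A\rho^{-B}]$ of \eqref{crho}. The essential difficulty is the case of $\bar x$ close to $\partial E$, where balls of radius $1$ around $\bar x$ do not fit in $E$ and a fixed-radius chain would cost $\sim 1/\rho$ steps and only a double-exponential constant. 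I would overcome this as in \cite{MRC},\cite{MR}, exploiting the Lipschitz cone at $\bar x$: placing along its axis a chain of balls whose radii grow geometrically with the distance to the vertex, one passes from scale $\rho$ to scale $1$ in only $\sim\log(1/\rho)$ steps while staying inside $E$, again giving a composite exponent that is a power of $\rho$. Chaining this to the interior estimate of the first step and inverting once gives $\int_{B_\rho(\bar x)}|\nabla u|^2\ge C\exp[-A(\rho_0/\rho)^B]\,M$, which after restoring $\rho_0$ is exactly \eqref{POS}--\eqref{crho}, with $s=s(n,M_0)$ chosen so that the cone construction has room.

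I expect the main obstacle to be the uniform treatment of points $\bar x$ near the boundary: keeping the number of balls in the chain logarithmic in $\rho_0/\rho$ rather than linear, via the cone construction, is precisely what upgrades a double-exponential to the single-exponential-in-a-power rate \eqref{crho}, and it is where the Lipschitz character of $\partial E$ is used in an essential way. A secondary technical point is the justification that the scalar three spheres inequality may be applied to $\int_{B_r}|\nabla u|^2$ for the Stokes system through its bilaplacian reformulation; I would take this from Theorem \ref{teotresfere} and the discussion preceding it.
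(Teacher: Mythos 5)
Your proposal follows essentially the same route as the paper: the reduction to biharmonicity of $\nabla u$ (via $\triangle p=0$), the three spheres inequality iterated along cone-based chains of balls whose radii grow geometrically so that only $O(\log(\rho_0/\rho))$ steps are needed near $\partial E$, and the use of the frequency bound (\ref{apriori7POS}) together with the Poincar\'e inequality (\ref{pancarre}) to prevent the energy from hiding in a thin boundary layer. The only organizational difference is that the paper propagates smallness from $B_\rho(\bar x)$ to \emph{every} ball of radius $\rho$ centered in $E_{s\rho}$, sums over a covering of $E_{(s+1)\rho}$, and bounds the complementary layer directly (see (\ref{stepcinque})--(\ref{unmezzo})), whereas you first localize a fixed fraction of the energy $M=\int_E|\nabla u|^2$ in a single interior ball and then descend to scale $\rho$; the two schemes are equivalent and consume the same ingredients.

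One claim in your first stage is stated too strongly and, as written, would not go through. You assert that a fixed fraction of $M$ lies in $E_{\rho_0}$ and that the pigeonhole ball of radius $\rho_0$ can be joined to a fixed interior ball by a bounded chain of three-spheres steps. But (i) the ball selected by averaging over a cover of $E$ may be centered arbitrarily close to, or on, $\partial E$, so no concentric triple of balls about its center is contained in $E$ and the three spheres inequality cannot even be started there; and (ii) the boundary-layer control obtainable from Lemma \ref{42} together with the Sobolev/H\"older argument is of the form $\nor{\nabla u}{E\setminus E_h}/\nor{\nabla u}{E}\le C F^2\,|E\setminus E_h|^{1/n}$ with $|E\setminus E_h|\le Ch$ (cf. (\ref{storysofar}), (\ref{buttatali3})), which is $\le \tfrac12$ only for $h\le\bar h$ with $\bar h$ depending on $F$ --- not for $h$ comparable to $\rho_0$. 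Both defects are repaired by running your first stage at the scale $\bar\rho=\bar\rho(n,M_0,M_1,F)$ of the paper rather than at $\rho_0$: first prove $\nor{\nabla u}{E_{(s+1)\bar\rho}}\ge\tfrac12\nor{\nabla u}{E}$, then pigeonhole over balls of radius $\bar\rho$ centered in $E_{s\bar\rho}$ (so that the three spheres inequality applies), and chain inside the connected set $E_{s\bar\rho}$; since $\bar\rho$ is an admissible constant, the dependence of $A$, $B$, $C$ in (\ref{crho}) is unaffected. With this correction the rest of your argument --- the concentric iteration in the interior, the cone chain near the boundary, and the single inversion producing $\exp[-A(\rho_0/\rho)^B]$ --- matches the paper's Steps 2--5.
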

\begin{proposition}[Lipschitz propagation of smallness up to boundary data] \label{teoPOSC}
Under the hypotheses of Theorem \ref{principale}, for all $\rho>0$, if $\bar{x} \in (\Omega_i)_{{{(s+1)\rho}}}$,  we have for $i=1,2$:
\begin{equation} \label{POScauchy}
\frac{1}{\rho_0^{n-2}} \int_{B_{\rho}(\bar{x})} \! |\nabla u_i|^2 dx \ge  C_\rho \norma{g}{\frac{1}{2}}{\Gamma}^2,
\end{equation}
where $C_\rho$ is as in (\ref{crho}) (with possibly a different value of the term $C$), and $s$ is given by Proposition \ref{teoPOS}.
\end{proposition}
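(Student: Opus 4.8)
The plan is to reduce the statement to Proposition \ref{teoPOS} applied to the domain $E=\Omega_i$, and then to replace the \emph{global} Dirichlet energy $\int_{\Omega_i}|\nabla u_i|^2$ appearing on the right-hand side of (\ref{POS}) by a lower bound in terms of $\norma{g}{\frac{1}{2}}{\Gamma}^2$. Since $\mathrm{supp}\,g\subset\subset\Gamma$, I regard $g$ as extended by zero to all of $\partial\Omega$, so that $u_i$ is exactly the solution of (\ref{NSEPOS}) on $E=\Omega_i$ with boundary datum $\til{g}$ equal to $g$ on $\partial\Omega$ and to $0$ on $\partial D_i$. Concatenating the two bounds will produce (\ref{POScauchy}), with the constant from the energy estimate absorbed into the factor $C$ that enters $C_\rho$ in (\ref{crho}).

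First I would verify that Proposition \ref{teoPOS} is applicable to $\Omega_i$. By (\ref{apriori1}), (\ref{apriori3}) and the separation hypothesis (\ref{apriori4}), $\Omega_i=\Omega\setminus\overline{D_i}$ is a bounded Lipschitz domain with constants controlled by $\rho_0,M_0$, and $|\Omega_i|\le|\Omega|\le M_1\rho_0^n$ gives (\ref{apriori2}); moreover its boundary splits as the disjoint union $\partial\Omega_i=\partial\Omega\sqcup\partial D_i$. Because $\til{g}$ vanishes on $\partial D_i$ and equals $g$ on $\partial\Omega$, the datum $\til{g}$ lies in $\accan{\frac{3}{2}}{\partial\Omega_i}$ (hypothesis (\ref{apriori5POS})), is not identically zero, and has zero mean by (\ref{aprioriexist}). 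For the ratio condition (\ref{apriori7POS}) I would use that $\til{g}$ is supported on $\partial\Omega$ to collapse both the $H^{1/2}$ and the $L^2$ norms over $\partial\Omega_i$ onto the corresponding norms over $\partial\Omega$, and then invoke the equivalence (\ref{equivalence}) together with (\ref{apriori7}) to bound the ratio by $F'=cF$. Finally, choosing any $P\in\partial D_i$, the separation (\ref{apriori4}) forces $\partial\Omega_i\cap B_{\rho_0}(P)\subset\partial D_i$, where $\til{g}=0$, which supplies the vanishing hypothesis. Proposition \ref{teoPOS} then yields, for every $\bar{x}\in(\Omega_i)_{s\rho}$,
\[ \int_{B_{\rho}(\bar{x})}|\nabla u_i|^2\,dx\ge C_\rho\int_{\Omega_i}|\nabla u_i|^2\,dx, \]
and since $(\Omega_i)_{(s+1)\rho}\subset(\Omega_i)_{s\rho}$ this holds a fortiori under the hypothesis $\bar{x}\in(\Omega_i)_{(s+1)\rho}$ of the statement.

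The core of the argument is the energy lower bound
\[ \int_{\Omega_i}|\nabla u_i|^2\,dx\ge c\,\rho_0^{\,n-2}\,\norma{g}{\frac{1}{2}}{\Gamma}^2, \]
which I would obtain by chaining three ingredients. Since $u_i=0$ on $\partial\Omega_i\cap B_{\rho_0}(P)$ for $P\in\partial D_i$, the Poincar\'e inequality (\ref{pancarre}) gives $\norma{u_i}{1}{\Omega_i}\le c\,\rho_0\,\normadue{\nabla u_i}{\Omega_i}$. Next, the trace theorem on $\Omega_i$ yields $\norma{u_i}{\frac{1}{2}}{\partial\Omega_i}\le c\,\norma{u_i}{1}{\Omega_i}$. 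Finally, because $\partial\Omega$ is a connected component of $\partial\Omega_i$ and $u_i=\til{g}=g$ there, restricting the trace norm to this component and applying the left inequality in (\ref{equivalence}) gives $\norma{g}{\frac{1}{2}}{\Gamma}\le\norma{g}{\frac{1}{2}}{\partial\Omega}\le\norma{u_i}{\frac{1}{2}}{\partial\Omega_i}$. Concatenating these and using $\normadue{\nabla u_i}{\Omega_i}^2=\rho_0^{-n}\int_{\Omega_i}|\nabla u_i|^2$ gives the displayed bound after squaring. Combining it with the previous display produces $\rho_0^{-(n-2)}\int_{B_{\rho}(\bar{x})}|\nabla u_i|^2\ge c\,C_\rho\,\norma{g}{\frac{1}{2}}{\Gamma}^2$, which is exactly (\ref{POScauchy}) with $C$ in (\ref{crho}) replaced by $cC$.

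I expect the main obstacle to be bookkeeping rather than structural: one must check that the regularity constants of $\Omega_i$ and the ratio constant $F'$ in (\ref{apriori7POS}) depend only on $n,M_0,M_1,F$, so that $C_\rho$ retains the dependence claimed in (\ref{crho}), and one must track the powers of $\rho_0$ carefully through Poincar\'e and the trace inequality in the normalized, dimensionally consistent norms. The one genuinely structural point is that the zero-extension of $g$, combined with the decomposition $\partial\Omega_i=\partial\Omega\sqcup\partial D_i$, lets every boundary norm of $\til{g}$ reduce to a norm over $\partial\Omega$; this is what makes both the verification of the hypotheses of Proposition \ref{teoPOS} and the final energy estimate go through cleanly.
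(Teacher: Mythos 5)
Your proposal is correct and follows essentially the same route as the paper: apply Proposition \ref{teoPOS} with $E=\Omega_i$ and datum $g$ extended by zero to $\partial D_i$, then bound the global energy from below via the Poincar\'e inequality (\ref{pancarre}), the trace theorem, and the equivalence (\ref{equivalence}). The only difference is that you spell out the verification of the hypotheses (\ref{apriori5POS})--(\ref{apriori7POS}) and the vanishing condition on $\partial D_i$, which the paper leaves implicit.
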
 
\begin{proposition}[Stability estimate of continuation from Cauchy data] 
\label{teostabest} Under the hypotheses of Theorem \ref{principale} we have 
\begin{equation}\label{stabsti1} \frac{1}{\rho_0^{n-2}} \int_{D_2\setminus D_1} |\nabla u_1|^2 \le C \norma{g}{\frac{1}{2}}{\Gamma}^2 \omega\Bigg( \frac{\epsilon}{\norma{g}{\frac{1}{2}}{\Gamma}} \Bigg) \end{equation}
\begin{equation}\label{stabsti2} \frac{1}{\rho_0^{n-2}} \int_{D_1\setminus D_2} |\nabla u_2|^2 \le C \norma{g}{\frac{1}{2}}{\Gamma}^2 \omega\Bigg( \frac{\epsilon}{\norma{g}{\frac{1}{2}}{\Gamma}} \Bigg)\end{equation}
where $\omega$ is an increasing continuous function, defined on $\mathbb{R}^+$ and satisfying 
\begin{equation}\label{andomega} \omega(t) \le C \big( \log |\log t|\big)^{-c} \end{equation} 
for all $t < e^{-1}$, where $C$ only depends on $n$, $M_0$, $M_1$, $F$, and $c>0$ only depends on $n$. 
\end{proposition}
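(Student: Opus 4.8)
The plan is to reduce the statement to a quantitative unique continuation estimate for the difference of the two velocity fields, and then to convert the resulting interior smallness into a bound on the Dirichlet energy of $u_1$ across $D_2\setminus D_1$ by an integration-by-parts argument that exploits the two no-slip conditions. Throughout I write $\|g\|=\norma{g}{\frac{1}{2}}{\Gamma}$ for brevity. First I would work in the open set $G$ defined as the connected component of $\Omega\setminus(\overline{D_1}\cup\overline{D_2})$ whose closure contains $\Gamma$; by (\ref{apriori2bis})--(\ref{apriori4}) and the connectedness of the boundaries this component is well defined and meets $\Gamma$. In $G$ both $u_1$ and $u_2$ solve (\ref{NSE}), so $w=u_1-u_2$ with $q=p_1-p_2$ solves the homogeneous Stokes system $\dive\sigma(w,q)=0$, $\dive w=0$ in $G$. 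As the two problems share the Dirichlet datum $g$ we have $w=0$ on $\Gamma$, while (\ref{HpPiccolo}) gives $\rho_0\norma{\sigma(w,q)\cdot\nu}{-\frac{1}{2}}{\Gamma}\le\epsilon$. Thus $w$ is a Stokes field with small Cauchy data on $\Gamma$, and the problem becomes one of propagating this smallness through $G$ up to a neighbourhood of $\partial D_2$. Note that $D_2\setminus D_1\subset\Omega_1$, so $u_1$ is defined there while $u_2$ is not, which is why an energy argument rather than a direct use of $w$ is needed.

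Next I would carry out the propagation. Near $\Gamma$ the Cauchy data of $w$ are not interior data, so I would first invoke the extension lemma, Proposition \ref{teoextensionNSE}, to continue $w$ across $\Gamma$ as a solution of a Stokes-type system whose norm in the added collar is controlled by $\epsilon/\|g\|$ (this is exactly the step that relies on the velocity being the assigned datum). This turns the boundary smallness into interior smallness in a ball just outside $\Gamma$. From that ball I would iterate the three spheres inequality, Theorem \ref{teotresfere}, along a chain of overlapping balls contained in $G_h=\{x\in G: d(x,\partial G)>h\}$, reaching any interior point and in particular the collar of $\partial D_2$ at distance $h$. The three spheres inequality yields H\"older propagation with an exponent that deteriorates as the chain approaches $\partial G$; optimizing the layer width $h$ against the number of balls produces an estimate of the form $\rho_0^{2-n}\int_{B_\rho(\bar x)}|\nabla w|^2\le C\|g\|^2\,\omega(\epsilon/\|g\|)$ with $\omega$ of the log-log type (\ref{andomega}), the a priori bounds $\norma{u_i}{1}{\Omega_i}\le C\|g\|$ from Theorem \ref{TeoRegGen} furnishing the global constant that freezes the H\"older exponent.

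Finally I would convert this into the claimed bound. Writing $\Sigma_1=\partial D_1\cap\partial(D_2\setminus D_1)$ and $\Sigma_2=\partial D_2\cap\partial(D_2\setminus D_1)$, I multiply $\dive\sigma(u_1,p_1)=0$ by $u_1$ and integrate over $D_2\setminus D_1$; using $\dive u_1=0$ to discard the pressure contribution one finds $\int_{D_2\setminus D_1}|\nabla u_1|^2$ equal to boundary integrals over $\Sigma_1\cup\Sigma_2$ of terms of the form $(\sigma(u_1,p_1)\cdot n)\cdot u_1$ and $(\nabla u_1\,n)\cdot u_1$. On $\Sigma_1$ the factor $u_1$ vanishes by the no-slip condition on $\partial D_1$; on $\Sigma_2\subset\partial D_2$, where $u_2=0$, the coincidence of traces of $u_1$ from the two sides gives $u_1=u_1-u_2=w$. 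Hence $\int_{D_2\setminus D_1}|\nabla u_1|^2$ is controlled by $\norma{\sigma(u_1,p_1)\cdot n}{-\frac{1}{2}}{\Sigma_2}\,\norma{w}{\frac{1}{2}}{\Sigma_2}$ and a similar pairing. Since $u_1$ solves the Stokes system in a full neighbourhood of $\Sigma_2$, interior regularity bounds the stress factor a priori by $C\|g\|$, while a trace and Caccioppoli estimate bounds $\norma{w}{\frac{1}{2}}{\Sigma_2}$ by the interior energy of $w$ in the collar of $\Sigma_2$, which is small by the previous step. This yields (\ref{stabsti1}); (\ref{stabsti2}) follows by interchanging the roles of $D_1$ and $D_2$.

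The hard part is the propagation step: since no doubling inequality at the boundary is available for the Stokes system, the smallness cannot be carried up to $\partial D_2$ with a uniform H\"older exponent, and one is forced to stop at distance $h>0$ and optimize, which is precisely what degrades the rate from logarithmic to log-log. Care is also needed with the geometry---ensuring that $G$ is the correct component, that $\Sigma_2$ stays away from $\partial D_1$, and that the collar of $\Sigma_2$ lies inside $G$---so that the trace of $w$ on $\Sigma_2$ is genuinely governed by the propagated interior smallness.
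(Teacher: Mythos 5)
Your overall architecture matches the paper's: $w=u_1-u_2$ has small Cauchy data on $\Gamma$, you extend it across $\Gamma$ (Proposition \ref{teoextensionNSE} via Theorem \ref{stabilitycauchy}), propagate smallness by chains of three-spheres inequalities inside $G$, and convert the result into a bound on $\int_{D_2\setminus D_1}|\nabla u_1|^2$ by integrating by parts and exploiting the two no-slip conditions. But the conversion step has a genuine gap. Your integration by parts produces the trace of $w$ on $\Sigma_2\subset\partial D_2\cap\partial G$, and you claim $\norma{w}{\frac{1}{2}}{\Sigma_2}$ is controlled by the interior energy of $w$ in a collar of $\Sigma_2$, ``which is small by the previous step.'' It is not: the three-spheres chains must stay inside $G$, so the propagation only yields smallness of $w$ at distance at least a fixed multiple of $h$ from $\partial G$; the one-sided collar needed for the trace estimate necessarily contains points closer to $\Sigma_2$ than that, where no smallness is available (and cannot be obtained this way, absent a boundary doubling inequality --- the very obstruction you cite). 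The paper bridges this by integrating by parts not over $D_2\setminus D_1$ but over $(\Omega\setminus G^\rho)\setminus D_1^\rho$, whose relevant boundary pieces are the regularized surfaces $\Gamma_1^\rho\subset\partial D_1^\rho$ and $\Gamma_2^\rho\subset\partial D_2^\rho$ sitting at distance comparable to $\rho$ inside $G$; there the $L^\infty$ smallness of $w$ from the propagation is available, and the pointwise values of $u_1$ are controlled by combining the uniform $C^{1,\alpha}$ bound of Lemma \ref{teoschauder} with the vanishing of $u_1$ on $\partial D_1$ and of $u_2$ on $\partial D_2$, at the price of additive $O(\rho/\rho_0)$ errors (including the thin-shell volume term (\ref{6.53})) that are then optimized against $\delta^{S(\rho)}$. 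Without the regularized domains of Lemma \ref{regularized} and these $C^{1,\alpha}$ estimates, your final inequality does not close.

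A secondary geometric gap: $\Sigma_2=\partial D_2\cap\partial(D_2\setminus D_1)$ need not be contained in $\partial G$; portions of $\partial D_2$ bounding $D_2\setminus D_1$ may face components of $\Omega\setminus(\overline{D_1}\cup\overline{D_2})$ not reachable from $\Gamma$, where nothing about $w$ can be propagated. The paper avoids this by first enlarging the domain of integration via $D_2\setminus\overline{D_1}\subset\Omega_1\setminus\overline{G}$, so that after regularization the boundary piece carrying $w$ is exactly $\partial D_2^\rho\cap\partial G^\rho$. A minor point as well: the boundary pressure terms $\int p_1(u_1\cdot\nu)$ are not discarded by incompressibility (only the volume term $\int p_1\,\dive u_1$ vanishes); they must be estimated alongside the gradient terms, as the paper does in (\ref{sommandi}).
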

\begin{proposition}[Improved stability estimate of continuation] \label{teostabestimpr}
Let the hypotheses of Theorem \ref{principale} hold. Let $G$ be the connected component of $\Omega_1 \cap \Omega_2$ containing $\Gamma$, and assume that $\partial G$ is of Lipschitz class of constants $\tilde{\rho}_0$ and $\tilde{M_0}$, where $M_0>0$ and $0<\tilde{\rho}_0<\rho_0$. Then (\ref{stabsti1}) and (\ref{stabsti2}) both hold with $\omega$ given by
\begin{equation}\label{omegabetter}
\omega(t)= C |\log t|^{\gamma},
\end{equation}
defined for $t<1$, where $\gamma >0$ and $C>0$ only depend on $M_0$, $\tilde{M_0}$, $M_1$ and $\frac{\rho_0}{\tilde{\rho}_0}$. 
\end{proposition}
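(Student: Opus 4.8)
The plan is to reproduce the architecture of the proof of Proposition~\ref{teostabest}, modifying only the single step in which smallness is propagated from the Cauchy data on $\Gamma$ into the interior, and to gain there the full benefit of the new hypothesis that $\partial G$ is Lipschitz with uniform constants $\tilde{\rho}_0$, $\tilde{M_0}$. I shall describe the argument for (\ref{stabsti1}); the estimate (\ref{stabsti2}) follows by exchanging the roles of the two solutions. The starting remark is that $D_2\setminus D_1\subset\Omega_1$, so that $u_1$ is a genuine Stokes solution on $D_2\setminus D_1$, vanishing on the portion of $\partial D_1$ bounding it; hence, once we know that $u_1$ is small on the part of $\partial D_2$ facing $G$, an energy identity together with the Poincar\'e inequality (\ref{pancarre}) and the regularity estimate of Theorem~\ref{TeoRegGen} will bound $\frac{1}{\rho_0^{n-2}}\int_{D_2\setminus D_1}|\nabla u_1|^2$ by that boundary smallness.

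To quantify the smallness of $u_1$ on $\partial D_2\cap\partial G$ I would work with the difference $w=u_1-u_2$, which solves the Stokes system in $G$ and whose Cauchy data on $\Gamma$ are small: $w=0$ on $\Gamma$ because $u_1=u_2=g$ there, while $\sigma(w)\cdot\nu$ is controlled by $\epsilon$ through (\ref{HpPiccolo}). Since $u_2=0$ on $\partial D_2$, a bound on the trace of $w$ on $\partial D_2\cap\partial G$ is exactly a bound on $u_1$ there. Thus the whole estimate rests on a quantitative continuation from the small Cauchy data of $w$ on $\Gamma$ up to the boundary portion $\partial D_2\cap\partial G$ of $G$.

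First I would extend $w$ a little across $\Gamma$ by means of Proposition~\ref{teoextensionNSE}, turning the Cauchy smallness on $\Gamma$ into interior smallness on a ball, and then iterate the three spheres inequality (Theorem~\ref{teotresfere}) along a chain of balls contained in $G$ (the same mechanism underlying Proposition~\ref{teoPOS}, now carried out inside $G$ itself), obtaining for interior points $\bar x\in G$ at distance $\geq h$ from $\partial G$ a bound of the form
\[
\int_{B_\rho(\bar x)}|\nabla w|^2\le C\,\norma{g}{\frac{1}{2}}{\Gamma}^2\left(\frac{\epsilon}{\norma{g}{\frac{1}{2}}{\Gamma}}\right)^{\theta(\bar x)},
\]
in which the exponent $\theta(\bar x)$ degrades as $\bar x$ approaches $\partial G$. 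The decisive point, and the origin of the improvement over the log-log rate, is the length of this chain. In the general setting of Proposition~\ref{teostabest} the set $\Omega_1\cap\Omega_2$ may contain arbitrarily thin necks where $\partial D_1$ and $\partial D_2$ nearly touch, which forces the radii along the chain to shrink and produces the second logarithm. Under the present hypothesis $\partial G$ is Lipschitz, so $G$ satisfies a uniform interior cone condition at scale $\tilde{\rho}_0$: every interior point can be joined to the ball near $\Gamma$ by a chain of balls of comparable radii whose number is controlled only by $\tilde{M_0}$, $M_1$ and the ratio $\rho_0/\tilde{\rho}_0$. This converts the iteration into a single-logarithm estimate, and, after optimizing the free distance $h$ against the local thickness of $D_2\setminus D_1$ near $\partial D_2$ and passing to the trace, yields the rate (\ref{omegabetter}).

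The main obstacle I expect is precisely the quantitative, uniform construction of this chain up to $\partial G$: one must check that the Lipschitz cones at all boundary points of $G$ are wide enough to host interior balls of uniformly comparable radius reaching within $O(\rho)$ of $\partial D_2$, and carefully track how every constant depends only on $M_0$, $\tilde{M_0}$, $M_1$ and $\rho_0/\tilde{\rho}_0$, since $\tilde{\rho}_0$ may be far smaller than $\rho_0$. Controlling the interplay between the two length scales $\rho_0$ and $\tilde{\rho}_0$ inside the final exponent $\gamma$ is the delicate quantitative part; the reduction carried out in the first two paragraphs, by contrast, is essentially the same energy-and-Poincar\'e bookkeeping already used for Proposition~\ref{teostabest}.
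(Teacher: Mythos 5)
Your overall architecture is the paper's: reduce $\int_{D_2\setminus D_1}|\nabla u_1|^2$ by integration by parts to $C\rho_0^{n-2}\norma{g}{\frac{1}{2}}{\Gamma}\max_{\partial D_2\cap\partial G}|w|$ with $w=u_1-u_2$ (using the no-slip condition on $\partial D_1$ and the $C^{1,\alpha}$ bounds of Lemma \ref{teoschauder}), then propagate the smallness of the Cauchy data of $w$ on $\Gamma$ into $G$ via Theorem \ref{stabilitycauchy} and chains of three spheres inequalities. Up to the point where you have a fixed ball $B_{\rho_1}(w_1)$, at distance comparable to $\tilde{\rho}_0$ from a boundary point $z\in\partial G$, on which $\int|w|^2\le C\norma{g}{\frac{1}{2}}{\Gamma}^2\tilde{\epsilon}^{2\beta}$ with a \emph{fixed} exponent $\beta$ (because the connecting chain uses finitely many balls of fixed radius), you and the paper agree.

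The gap is in the last step, which is exactly where the single-log rate is decided, and your description of it is both incomplete and, as stated, geometrically inconsistent: balls contained in $G$ that reach within $O(\rho)$ of $\partial G$ necessarily have radius $O(\rho)$, so they cannot be ``of uniformly comparable radius.'' If you fill the remaining distance to $z$ with a chain of balls of radius $\sim\rho$, their number grows like a power of $\rho_0/\rho$ and the exponent degrades like $\delta^{C(\rho_0/\rho)^n}$ --- which, after optimizing $\rho$ against $\tilde{\epsilon}$, returns the log-log rate of Proposition \ref{teostabest}, not the claimed improvement. The missing idea is the \emph{geometrically shrinking} chain nested in the interior Lipschitz cone at $z$: the paper sets $w_k=z+\lambda_k\xi$, $\lambda_k=\chi^{k-1}\lambda_1$, $\rho_k=\chi^{k-1}\rho_1$ with $\chi=\frac{1-\sin\vartheta_1}{1+\sin\vartheta_1}$, so that $B_{\rho_{k+1}}(w_{k+1})\subset B_{3\rho_k}(w_k)$ and all enlarged balls stay in $C(z,\xi,\vartheta_0)\cap B_{\tilde{\rho}_0}(z)\subset G$. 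Reaching distance $\rho$ from $z$ then takes only $k(\rho)\sim|\log(\rho/d(1))|/|\log\chi|$ iterations, so the exponent is $\delta^{k(\rho)-1}\sim(\rho/d(1))^{|\log\delta|/|\log\chi|}$ --- a \emph{power} of $\rho$ rather than an exponential in $1/\rho$ --- and balancing $\rho/\rho_0$ against $\tilde{\epsilon}^{\beta_1\delta^{k(\rho)-1}}/\chi^{\frac{n}{2}(k(\rho)-1)}$ at $\rho=d(1)|\log\tilde{\epsilon}^{\beta_1}|^{-B}$, $B=\frac{|\log\chi|}{2\log|\delta|}$, produces $|\log\tilde{\epsilon}|^{-B}$. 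You correctly flag the boundary approach as ``the main obstacle,'' but the cone-nested shrinking chain is the idea that overcomes it, and without it your claimed rate (\ref{omegabetter}) does not follow.
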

\begin{proposition} \label{teoreggra} Let $\Omega_1$ and $\Omega_2$ two bounded domains satisfying (\ref{apriori1}). Then there exist two positive numbers $d_0$, $\tilde{\rho}_0$, with $\tilde{\rho}_0 \le \rho_0$, such that the ratios $\frac{\rho_0}{\tilde{\rho}_0}$, $\frac{d_0}{\rho_0}$ only depend on $n$, $M_0$ 
and $\alpha$ such that, if 
\begin{equation} \label{relgr1}
d_{\mathcal{H}} (\overline{\Omega_1}, \overline{\Omega_2}) \le d_0,
\end{equation}
then there exists $\tilde{M}_0>0$ only depending on $n$, $M_0$ and $\alpha$ such that  every connected component of $\Omega_1 \cap \Omega_2$ has boundary of Lipschitz class with constants $\tilde{\rho}_0$, $\tilde{M}_0$. 
\end{proposition}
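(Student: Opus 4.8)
The plan is to establish the conclusion by a purely local, geometric argument: I will show that, at a suitably small scale $\tilde{\rho}_0$, both boundaries $\partial\Omega_1$ and $\partial\Omega_2$ can be written as graphs of functions with small gradient over a \emph{common} coordinate system, so that the boundary of $\Omega_1\cap\Omega_2$ is locally the graph of the maximum of the two, which is Lipschitz with a controlled constant. Since being of Lipschitz class is a local property, it suffices to produce, at every boundary point of every connected component, such a representation with uniform constants.

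First I would fix the scale. Since each $\partial\Omega_i$ is of class $C^{2,\alpha}$ with constants $\rho_0,M_0$ in the sense of (\ref{apriori1}), at any of its points the local defining function $\varphi$ satisfies $\varphi(0)=0$, $\nabla\varphi(0)=0$ and, with the normalized norm, $\rho_0\|D^2\varphi\|_{L^\infty}\le M_0$. Taylor's formula then gives $|\nabla\varphi(x')|\le (M_0/\rho_0)|x'|$, so over a ball of radius $\tilde{\rho}_0=\rho_0/K$ the gradient is bounded by $M_0/K$. I choose $K=K(n,M_0,\alpha)$ large enough that $\epsilon_0:=M_0/K$ is as small as needed below; this fixes $\tilde{\rho}_0$ and the ratio $\rho_0/\tilde{\rho}_0$, depending only on $n,M_0,\alpha$. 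I also record the uniform two-sided (interior and exterior) ball condition, with radius comparable to $\rho_0/M_0$, which follows from the bounded curvature.

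The heart of the argument, and the step I expect to be the main obstacle, is to upgrade the $C^0$-type closeness coming from (\ref{relgr1}) to $C^1$-closeness of the two boundaries, i.e. closeness of their normals. Using the uniform two-sided ball condition I would first pass from $d_{\mathcal{H}}(\overline{\Omega_1},\overline{\Omega_2})\le d_0$ to a comparable bound $d_{\mathcal{H}}(\partial\Omega_1,\partial\Omega_2)\le Cd_0$ on the boundaries, and show that for $d_0$ small (depending on $n,M_0,\alpha$) the surface $\partial\Omega_2$ remains a graph in the coordinates adapted to a nearby point $P\in\partial\Omega_1$: if its normal were nearly horizontal in these coordinates, the interior ball of $\Omega_2$ tangent there would dip a distance $\gg d_0$ below the nearly flat graph of $\varphi_1$ into the complement of $\Omega_1$, contradicting the Hausdorff bound. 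Once both boundaries are graphs $\varphi_1,\varphi_2$ over a common ball $B'_{\tilde{\rho}_0/2}$ with $\|\varphi_1-\varphi_2\|_{L^\infty}\le Cd_0$, I would obtain gradient closeness from an interpolation inequality of the form
\[
\|\nabla(\varphi_1-\varphi_2)\|_{L^\infty}\le C\,\|\varphi_1-\varphi_2\|_{L^\infty}^{\frac{\alpha}{1+\alpha}}\,\|\varphi_1-\varphi_2\|_{C^{1,\alpha}}^{\frac{1}{1+\alpha}},
\]
whose right-hand side is small because the first factor is controlled by $d_0$ and the second by the a priori $C^{2,\alpha}$ bound; this is exactly where the H\"older exponent $\alpha$ enters the dependence of the constants.

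Finally I would assemble the local picture. Fixing $P\in\partial(\Omega_1\cap\Omega_2)$, the previous steps give that in coordinates adapted to $P$ both defining functions are graphs with gradient at most $\epsilon_0$, and by the choice of scale no other sheet of either boundary enters the cylinder over $B'_{\tilde{\rho}_0/2}$; hence locally $\Omega_1\cap\Omega_2=\{x_n>\max(\varphi_1(x'),\varphi_2(x'))\}$. The function $\psi:=\max(\varphi_1,\varphi_2)$ satisfies $\psi(0)=0$ and is Lipschitz with constant at most $\epsilon_0<1$, so every connected component of $\Omega_1\cap\Omega_2$ is locally the epigraph of a Lipschitz function, i.e. of Lipschitz class with constants $\tilde{\rho}_0$ and a $\tilde{M}_0$ comparable to $\epsilon_0$; tracking the constants shows $\tilde{M}_0$ depends only on $n,M_0,\alpha$. (When $P$ lies on only one of the two boundaries, the representation reduces to a single $C^{2,\alpha}$ graph, which is a fortiori Lipschitz with the same constants.) This yields the stated conclusion.
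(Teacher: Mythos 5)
The paper does not prove this proposition itself but simply refers to \cite{ABRV}, where the argument is precisely the local geometric one you give: at a scale $\tilde{\rho}_0$ where both boundaries are graphs with small gradient, the Hausdorff bound forces the two local graphs to be referred to a common axis with both domains as epigraphs, so that the intersection is locally the epigraph of the maximum of the two defining functions, which is Lipschitz with controlled constants. Your reconstruction --- including the two genuinely delicate steps, namely passing from closeness of the closures to closeness of the boundaries and upgrading $C^0$-closeness to closeness of the normals via the two-sided ball condition and interpolation --- is correct in outline and matches that reference.
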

We postpone the proofs of Propositions \ref{teoPOS} and \ref{teoPOSC}  to Section 4, while Propositions \ref{teostabest} and \ref{teostabestimpr} will be proven in Section 5. The proof of Proposition \ref{teoreggra} is purely geometrical and can be found in \cite{ABRV}. 
\begin{proof}[Proof of Theorem \ref{principale}.]
Let us call 
\begin{equation} \label{distanza} d= d_\mathcal{H}(\partial D_1, \partial D_2). \end{equation}
Let $\eta$ be the quantity on the right hand side of (\ref{stabsti1}) and (\ref{stabsti2}), so that
\begin{equation} \label{eta} \begin{split}
\int_{D_2 \setminus D_1} |\nabla u_1|^2 \le \eta, \\
\int_{D_1 \setminus D_2} |\nabla u_2|^2 \le \eta. \\
\end{split}\end{equation}
We can assume without loss of generality that there exists a point $x_1 \in \partial D_1$ such that dist$(x_1, \partial D_2)=d$. That being the case, we distinguish two possible situations: \\ (i) $B_d(x_1) \subset D_2$, \\ (ii) $B_d(x_1) \cap D_2 =\emptyset$.\\
In case (i), by the regularity assumptions on $\partial D_1$, we find a point $x_2 \in D_2 \setminus D_1$ such that $B_{td}(x_2) \subset D_2 \setminus D_1$, where $t$ is small enough (for example, $t=\frac{1}{1+\sqrt{1+M_0^2}}$ suffices). Using (\ref{POScauchy}), with $\rho = \frac{t d}{s}$ we have 
\begin{equation} \label{stimapos} \int_{B_\rho (x_2) } |\nabla u_1|^2 dx \ge   \frac{C\rho_0^{n-2}}{\exp \Big[A \big(\frac{s\rho_0}{t d }\big)^B\Big]}  \norma{g}{\frac{1}{2}}{\Gamma}^2.  
\end{equation}
By Proposition \ref{teostabest}, we have: 
\begin{equation} \label{quellaconomega}
\omega\Bigg( \frac{\epsilon}{ \norma{g}{\frac{1}{2}}{\Gamma}} \Bigg)   \ge \frac{C}{\exp \Big[A \big(\frac{s\rho_0}{t d }\big)^B\Big]}  , \end{equation}  
and solving for $d$ we obtain an estimate of log-log-log type stability:
\begin{equation}\label{logloglog}
d \le C \rho_0 \Bigg\{ \log \Bigg[ \log \Bigg|\log\frac{\epsilon}{\norma{g}{\frac{1}{2}}{\Gamma}} \Bigg| \Bigg] \Bigg\}^{-\frac{1}{B}},
\end{equation} 
provided $\epsilon < e^{-e} \norma{g}{\frac{1}{2}}{\Gamma}$: this is not restrictive since, for larger values of $\epsilon$, the thesis is trivial. If we call $d_0$ the right hand side of (\ref{logloglog}), we have that there exists $\epsilon_0$ only depending on $n$, $M_0$, $M_1$ and $F$ such that, if $\epsilon \le \epsilon_0$ then $d\le d_0$. Proposition \ref{teoreggra} then applies, so that $G$ satisfies the hypotheses of Proposition \ref{teostabestimpr}. This means that we may choose  $\omega$ of the form (\ref{omegabetter}) in (\ref{quellaconomega}), obtaining (\ref{stabsti1}).
Case (ii) can be treated analogously, upon substituting $u_1$ with $u_2$.
\end{proof}

\section{Proof of Proposition \ref{teoPOS}.}
The main idea of the proof of Proposition \ref{teoPOS} is a repeated application of a three-spheres type inequality. Inequalities as such play a crucial role in almost all stability estimates from Cauchy data, thus they have been adapted to a variety of elliptic PDEs: in the context of the scalar elliptic equations (see \cite{ABRV}), then in the determination of cavities or inclusions in elastic bodies (\cite{MR}, \cite{MRC}) and more in general, for scalar elliptic equations (\cite{ARRV}) as well as systems (\cite{LNW}) with suitably smooth coefficients. We recall in particular the following estimate, which is a special case of a result of Nagayasu, Lin and Wang (\cite{LNW}), dealing with systems of differential inequalities of the form:
\begin{equation} \label{ellgeneral} |\triangle^l u^i | \le K_0 \sum_{|\alpha| \le \big[ \frac{3l}{2} \big] }  | D^\alpha u |  \,  \quad i=1,\dots , n. \end{equation}
Then the following holds (see \cite{LNW}):
\begin{theorem}[Three spheres inequality.] \label{teotresfere} Let $E \subset \mathbb{R}^n$  be a bounded domain  with Lipschitz boundary with constants $\rho_0$, $M_0$. Let $B_R(x)$ a ball contained in $E$, and let $u \in \accan{2l}{E}$ be a solution to (\ref{ellgeneral}). Then there exists a real number $\vartheta^* \in (0, e^{-1/2})$, depending only on $n$, $l$ and $K_0$ such that, for all $0<r_1 <r_2 <\vartheta^* r_3$ with $r_3  \le R$ we have:
\begin{equation} \label{tresfere}  \int_{B_{r_2}} \! | u|^2 dx \le C \Big(\int_{B_{r_1} } \! | u|^2 dx \Big)^\delta  \Big(\int_{B_{r_3}} \! | u|^2 dx \Big)^{1-\delta} \end{equation} 
where $\delta \in (0,1)$ and $C>0$ are constants depending only on $n$, $l$, $K_0$, $\frac{r_1}{r_3}$ and $\frac{r_2}{r_3}$, and the balls $B_{r_i}$ are centered in $x$.
\end{theorem}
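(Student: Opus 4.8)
The plan is to derive the three spheres inequality (\ref{tresfere}) from a Carleman estimate for the iterated Laplacian $\triangle^l$, regarding (\ref{ellgeneral}) as a lower order perturbation of the decoupled principal system $\triangle^l u^i = 0$, $i = 1, \dots, n$. Since the coupling among the components enters (\ref{ellgeneral}) only through the right hand side, which carries derivatives of order at most $[3l/2]$, the whole argument reduces to a \emph{scalar} weighted estimate for $\triangle^l$ applied componentwise, after which the coupling terms are absorbed. I would reconstruct the proof of \cite{LNW} along this classical Carleman route rather than through a frequency function argument, since weighted estimates accommodate both the system structure and the mere differential inequality most transparently. Throughout I would first translate so that the common center $x$ becomes the origin and rescale by $r_3$, so that it suffices to treat unit outer radius and let the constants depend on the normalized ratios $r_1/r_3$ and $r_2/r_3$.

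The central and most delicate step is the scalar Carleman estimate. For a radial weight $\phi(y) = \phi(|y|)$ of logarithmic type, chosen to be pseudoconvex with respect to $\triangle^l$ on the punctured ball $B_R \setminus \{0\}$, I would establish
\[
\sum_{|\alpha| \le 2l - 1} \tau^{\,2(2l - |\alpha|) - 1} \int e^{2\tau\phi} |D^\alpha w|^2 \, dy \le C \int e^{2\tau\phi} |\triangle^l w|^2 \, dy
\]
for every $w \in C_0^\infty(B_R \setminus \{0\})$ and every $\tau \ge \tau_0$, with $C$ and $\tau_0$ depending only on $n$ and $l$. I would prove this by writing $\triangle^l = (\triangle)^l$ and iterating the second order Carleman inequality $l$ times, bookkeeping at each stage the two positive powers of $\tau$ gained per order of differentiation. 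The decisive structural facts are that the powers of $\tau$ on the left stay strictly positive all the way down to $|\alpha| = 0$, and that $[3l/2] \le 2l - 1$ for every $l \ge 1$: it is exactly this inequality between the order of the right hand side of (\ref{ellgeneral}) and the order of the operator that makes the coupling absorbable.

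With the weighted estimate available, I would apply it componentwise to $w = \chi\, u^i$, where $\chi$ is a radial cutoff equal to $1$ on the annulus $\{ r_1 \le |y| \le r_2 \}$ and supported in a slightly larger annulus inside $B_{r_3} \setminus \{0\}$; here the gap condition $r_2 < \vartheta^* r_3$ supplies the geometric room for the cutoff and for the pseudoconvexity of $\phi$. Expanding $\triangle^l(\chi u^i) = \chi \triangle^l u^i + [\triangle^l, \chi] u^i$, the first term is controlled by (\ref{ellgeneral}) and absorbed into the left hand side for $\tau$ large (this is where $\tau_0$, and hence $\vartheta^*$, must be taken large depending on $K_0$), while the commutator is supported in the two thin transition layers near $|y| = r_1$ and near the outer radius, producing respectively the $\int_{B_{r_1}}$ and $\int_{B_{r_3}}$ contributions. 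Restricting the left hand side to the region where $\chi = 1$ and bounding $e^{2\tau\phi}$ by its extremal values on each region, the monotonicity and convexity of $\phi$ turn the resulting inequality, after optimizing over $\tau \ge \tau_0$, into the convex interpolation (\ref{tresfere}) with an exponent $\delta \in (0,1)$ fixed by the ratios $r_1/r_3$ and $r_2/r_3$. A routine density and regularization argument extends the conclusion from $C_0^\infty$ test functions to $u \in \accan{2l}{E}$.

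I expect the main obstacle to be the scalar Carleman estimate itself: securing positive powers of $\tau$ down to order zero, uniformly, with a single weight that is simultaneously pseudoconvex for the full factorization $(\triangle)^l$ and adapted to the three radii, is considerably more involved for $l \ge 2$ than in the second order case, and the singularity of the logarithmic weight at the origin must be handled with care. By contrast, once this estimate is in place, the passage to the system and the absorption of the coupling in (\ref{ellgeneral}) are comparatively routine, precisely because of the order gap $[3l/2] \le 2l - 1$.
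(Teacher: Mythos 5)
First, a point of comparison: the paper does not prove this theorem at all — it is quoted as a special case of the result of Lin, Nagayasu and Wang \cite{LNW}, and the author defers entirely to that reference. So there is no internal proof to match against; the relevant benchmark is \cite{LNW} itself. At the level of architecture your outline — a scalar Carleman estimate for $\triangle^l$ with a radial weight singular at the common center, applied componentwise to a cutoff of $u^i$, with the coupling in (\ref{ellgeneral}) absorbed thanks to the order gap $[3l/2]\le 2l-1$, and the interpolation exponent $\delta$ produced by optimizing over $\tau$ — is indeed the strategy of that reference, so you have reconstructed the right skeleton.

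As a proof, however, the proposal has a genuine gap exactly where you locate it: the displayed Carleman estimate for $\triangle^l$, with strictly positive powers of $\tau$ on every derivative down to order zero, is asserted rather than proved, and it is essentially the entire content of the theorem. ``Iterating the second order Carleman inequality $l$ times'' is not a routine step: applying the second-order estimate to $\triangle^{j}w$ controls $\triangle^{j}w$ and $\nabla\triangle^{j}w$, not the full arrays $D^{2j}w$ and $D^{2j+1}w$, so one must interpose weighted elliptic or interpolation estimates to recover all intermediate derivatives; this is what forces the weight in \cite{LNW} to carry polynomial singular factors $|x|^{-s}$ alongside $e^{2\tau\phi}$, factors absent from your display and which also make the final passage from $C_0^\infty(B_R\setminus\{0\})$ test functions to $u\in\accan{2l}{E}$ nontrivial. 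A smaller slip: $\vartheta^*$ lies in $(0,e^{-1/2})$ and enforces a gap between $r_2$ and $r_3$; the dependence on $K_0$ enters through $\tau_0$ and shrinks $\vartheta^*$, it does not make it ``large.''
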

First, we show that Proposition \ref{teoPOSC} follows from Proposition \ref{teoPOS}:
\begin{proof}[Proof of Proposition \ref{teoPOSC}.]
From Proposition \ref{teoPOS} we know that \begin{displaymath}  \int_{B_{\rho}(x)} \! |\nabla u_i|^2 dx \ge C_\rho  \int_{\Omega \setminus \overline{D_i}} \! |\nabla u_i|^2 dx, \end{displaymath}
where $C_\rho$ is given in (\ref{crho}).
We have, using Poincar\`e inequality (\ref{pancarre}) and the trace theorem, 
\begin{equation} \label{altofrequenza} \begin{split}  \int_{\Omega\setminus \overline{D_i}}  |\nabla u_i|^2 dx \ge C \rho_0^{n-2} \norma{u_i}{1}{\Omega \setminus \overline{D_i}}^2  \ge  C \rho_0^{n-2} \norma{g}{\frac{1}{2}}{\partial \Omega}^2. \end{split}\end{equation}
Applying the above estimate to (\ref{POS}) and using (\ref{equivalence}) will prove our statement.
\end{proof}
Next, we introduce a lemma we shall need later on:
\begin{lemma} \label{42}
Let the hypotheses of Proposition \ref{teoPOS} be satisfied. Then 
\begin{equation}
\normadue{u}{E} \ge \frac{C}{F^2} \rho_0 \normadue{\nabla u}{E}
\end{equation}
where $C>0$ only depends on $n$, $M_0$ and $M_1$.
\end{lemma}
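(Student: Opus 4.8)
The plan is to bound the $L^2$ norm of the gradient from above by the $L^2$ norm of $u$ itself, with a factor $F^2$, by pairing a multiplicative trace inequality with the standard energy estimate for (\ref{NSEPOS}) and then closing via an elementary quadratic inequality. Throughout write $a=\normadue{u}{E}$ and $b=\rho_0\normadue{\nabla u}{E}$, so that the assertion is equivalent to $b\le C F^2 a$ with $C$ depending only on $n$, $M_0$, $M_1$. Observe first that $a>0$: were $u\equiv 0$ in $E$ its trace would vanish, contradicting $g\not\equiv 0$; hence we may divide by $a$ and bound $r:=b/a$.

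First I would prove a multiplicative trace inequality
\[
\normadue{g}{\partial E}^2 \le C\,(a\,b + a^2).
\]
Since $\partial E$ is Lipschitz with constants $\rho_0$, $M_0$, one constructs a vector field $V$ on $E$ with $V\cdot\nu\ge c_*>0$ on $\partial E$, $\|V\|_{L^\infty}\le C$ and $\|\dive V\|_{L^\infty}\le C/\rho_0$, the constants depending only on $n$ and $M_0$ (and, through the covering of $\partial E$, on the volume bound (\ref{apriori2})). Applying the divergence theorem to $|u|^2 V$, using $g=u|_{\partial E}$ and $V\cdot\nu\ge c_*$, gives $\int_{\partial E}|g|^2\le \tfrac{1}{c_*}\int_E\big(2\,u\cdot(V\cdot\nabla)u + |u|^2\,\dive V\big)$; a Cauchy--Schwarz bound on the first term together with the $L^\infty$ bounds on $V$ and $\dive V$, and the dimensional normalization of the norms, then yields the displayed estimate.

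Next I would invoke the energy estimate for (\ref{NSEPOS}). Theorem \ref{TeoRegGen} with $m=-1$ and $f=0$ (the $H^1$ bound being available already on Lipschitz domains) gives $\norma{u}{1}{E}\le c_0\norma{g}{\frac{1}{2}}{\partial E}$, and since $b\le\norma{u}{1}{E}$, hypothesis (\ref{apriori7POS}) yields $b\le c_0 F\normadue{g}{\partial E}$, i.e. $\normadue{g}{\partial E}\ge b/(c_0 F)$. Inserting this lower bound into the trace inequality gives $b^2/(c_0^2F^2)\le C(ab+a^2)$, that is $r^2\le C c_0^2 F^2 (r+1)$. Solving this quadratic in $r$ and using that $F\ge 1$ (the $H^{1/2}$ norm dominates the $L^2$ norm, so the ratio in (\ref{apriori7POS}) is at least one) produces $r\le C' F^2$, equivalently $\normadue{u}{E}\ge \frac{C}{F^2}\,\rho_0\normadue{\nabla u}{E}$, as claimed.

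The step I expect to be the main obstacle is the multiplicative trace inequality on a merely Lipschitz domain: producing the transversal field $V$ with the stated uniform bounds and keeping track of the powers of $\rho_0$ so that the inequality is dimensionally consistent with the normalized norms. By contrast, the energy estimate is quoted and the final quadratic estimate is elementary. I would also take care to confirm that the $H^1$ energy bound for the Stokes system is valid under the Lipschitz regularity of Proposition \ref{teoPOS}, rather than the $C^2$ regularity assumed in Theorem \ref{TeoRegGen}.
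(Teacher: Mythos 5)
Your proof is correct and follows essentially the same route as the paper: the multiplicative trace inequality $\normadue{u}{\partial E}^2 \le C(\normadue{u}{E}\,\normadue{\nabla u}{E}+\normadue{u}{E}^2)$ (which the paper simply quotes from Grisvard rather than re-deriving via a transversal vector field), combined with the energy estimate and the bound (\ref{apriori7POS}). The only cosmetic difference is at the end: where you solve a quadratic in $r=b/a$ and invoke $F\ge 1$, the paper absorbs the lower-order term $\normadue{u}{E}^2$ directly via the Poincar\`e inequality (\ref{pancarre}), which is available here because $g$ vanishes on $\partial E\cap B_{\rho_0}(P)$, and thereby avoids any lower bound on $F$.
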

The proof is obtained in \cite{MRC}, with minor modifications. We report it here for the sake of completeness.
\begin{proof}
Assume $\rho_0=1$, otherwise the thesis follows by scaling. The following trace inequality holds (see \cite[Theorem 1.5.1.10]{21}):
\begin{equation} \label{trace1} 
\normadue{u}{\partial E} \le C (\normadue{\nabla u}{E} \normadue{u}{E} + \normadue{u}{E}^2), 
\end{equation}
where $C$ only depends on $M_0$ and $M_1$. Using the Poincar\`e inequality (\ref{pancarre}), we have 
\begin{equation}
\frac{\normadue{\nabla u}{E} }{ \normadue{u}{E} } \le C \frac{\normadue{\nabla u}{E}^2}{\normadue{u}{\partial E}^2}.
\end{equation}
This, together with (\ref{stimanormadiretto}), immediately gives the thesis. 
\end{proof}
A proof of Proposition \ref{teoPOS} has already been obtained in \cite{MRC} dealing with linearized elasticity equations; we give a sketch of it here, with the due adaptations.
\begin{proof}[Proof of Proposition \ref{teoPOS}.]
We outline the main steps taken in the proof. First, we show that the three spheres inequality (\ref{tresfere}) applies to $\nabla u$. Then, the goal is to estimate $\normadue{\nabla u}{E}$ by covering the set $E$ with a sequence of cubes $Q_i$ with center $q_i$ of "relatively small" size. Each of these cubes is contained in a sphere $S_i$, thus we estimate the norm of $\nabla u$ in every sphere of center $q_i$, by connecting $q_i$ with $x$ with a continuous arc, and apply an iteration of the three spheres inequality to estimate $\normadue{\nabla u}{S_i}$ in terms of $\normadue{\nabla u}{B_\rho(x)}$. However, the estimates deteriorate exponentially as we increase the number of spheres  (or equivalently, if the radius $\rho$ is comparable with the distance of $x$ from the boundary) giving an exponentially worse estimate of the constant $C_\rho$. To solve this problem, the idea is to distinguish two areas within $E_{s \rho}$, which we shall call $A_1$, $A_2$. We consider $A_1$ as the set of points $y \in E_{s \rho}$ such that $\mathrm{dist}(y, \partial E)$ is sufficiently large, whereas  $A_2$ is given as the  complement in $E_{s \rho}$ of $A_1$. Then, whenever we need to compare the norm of $\nabla u$ on two balls whose centers lie in $A_2$, we reduce the number of spheres by iterating the three spheres inequality over a sequence of balls with increasing radius, exploiting the Lipschitz character of $\partial E$ by building a cone to which all the balls are internall tangent to. Once we have reached a sufficiently large distance from the boundary, we are able to pick a chain of larger balls, on which we can iterate the three speres inequality again without deteriorating the estimate too much. This line of reasoning allows us to estimate the norm of $\nabla u$ on any sphere contained in $E_{s \rho}$, thus the whole $\normadue{\nabla u}{E}$. \\

{\bf Step 1.}
{ \it If $u \in \accauno{E}$ solves (\ref{NSEPOS}) then the three spheres inequality (\ref{tresfere}) applies to $ \nabla u$.}  
\begin{proof}[Proof of Step 1.] We show that $u$ can be written as a solution of a system of the form (\ref{ellgeneral}). By Theorem \ref{TeoRegGen}, we have $u \in \mathbf{H}^2(E)$ so that we may take the laplacian of the second equation in (\ref{NSE}):
\begin{displaymath} 
\triangle \dive u  =0. \end{displaymath}
Commuting the differential operators, and recalling the first equation in (\ref{NSE}),
\begin{displaymath} 
\triangle{p}=0
\end{displaymath}
thus $p$ is harmonic, which means that, if we take the laplacian of the first equation in (\ref{NSE}) we get 
\begin{displaymath}
\triangle^2 u=0,
\end{displaymath}
so that $\nabla u$ is also biharmonic, hence the thesis. 
\end{proof}
In what follows, we will always suppose $\rho_0=1$: The general case is treated by a rescaling argument on the biharmonic equation.
We closely follow the geometric construction given in \cite{MRC}. In the aforementioned work the object was to estimate $\| \hat{\nabla} u\|$, by applying the three spheres inequality to $\hat{\nabla} u$ (the symmetrized gradient of $u$); in order to relate it to the boundary data, this step had to be combined with Korn and Caccioppoli type inequalities. Here the estimates are obtained for $\|\nabla u \|$. \\
From now on we will denote, for $z \in \mathbb{R}^n$, $\xi \in \mathbb{R}^n$ such that $|\xi|=1$, and $\vartheta >0$, 
\begin{equation} \label{cono}
C(z, \xi, \vartheta)= \Big\{ x \in \mathbb{R}^n \text{ s.t. } \frac{(x-z) \cdot \xi}{|x-z|} > \cos \vartheta    \Big\}
\end{equation}
the cone of vertex $z$, direction $\xi$ and width $2 \vartheta$. \\ Exploiting the Lipschitz character of $\partial E$, we can find $\vartheta_0 >0$ depending only on $M_0$, $\vartheta_1>0$,  $\chi >1$ and $s>1$ depending only on $M_0$ and $n$, such that the following holds (we refer to \cite{MRC} for the explicit expressions of the constants $\vartheta_0$, $\vartheta_1$,  $\chi$, $s$,  and for all the detailed geometric constructions).\\
 {\bf Step 2.} 
{ \it Choose $0<\vartheta^* \le 1$ according to Theorem \ref{teotresfere} .There exists $\overline{\rho}>0$, only depending on $M_0$, $M_1$ and $F$, such that: \\
If $ 0<\rho \le \bar{\rho}$, and $x \in E$ is such that $ s \rho < \mathrm{dist} (x, \partial E) \le \frac{\vartheta^*}{4}$, then there exists $\hat{x} \in E$ satisfying the following conditions:  
\begin{enumerate}
\item[(i)] $B_{\frac{5 \chi \rho}{\vartheta^*}} (x) \subset C(\hat{x},e_n=\frac{x-\hat{x}}{|x-\hat{x}|} , \vartheta_0) \cap B_{\frac{\vartheta^* }{8}}
(\hat{x}) \subset E$,
\item[(ii)] Let $x_2 = x+ \rho(\chi+1)e_n$. Then the balls $B_\rho (x)$ and $B_{\chi \rho} (x_2)$ are internally tangent to the cone $C(\hat{x},e_n, \vartheta_1)$.
\end{enumerate}}
The idea is now to repeat iteratively the construction made once in Step 2. We define the following sequence of points and radii:
\begin{displaymath} \begin{split}
\rho_1 &= \rho, \; \; \; \rho_k = \chi \rho_{k-1}, \; \; \text{  for  } k \ge 2, \\ 
x_1 &= x, \; \; \; x_k=x_{k-1}+ (\rho_{k-1} + \rho_k) e_n , \qquad \text{  for  } k \ge 2. \end{split}
\end{displaymath}
We claim the following geometrical facts (the proof of which can be found again in \cite{MRC}, except the first, which is \cite[Proposition 5.5]{ARRV}): \\ 

{\it There exist $0<h_0<1/4$ only depending on $M_0$,  $\bar{\rho} >0$ only depending on $M_0$, $M_1$ and $F$,  an integer $k(\rho)$ depending also on $M_0$ and $n$,  such that, for all $h \le h_0$,  $0<\rho \le \bar{\rho}$ and for all integers $1<k \le k(\rho)-1$ we have: \begin{enumerate}
\item \label{fatto0} $E_h$ is connected,
\item \label{fatto1} $B_{\rho_k}(x_k)$ is internally tangent to $C(\hat{x}, e_n, \vartheta_1) $,
\item \label{fatto2}  $B_{\frac{5 \chi \rho_k}{\vartheta^*}}(x_k) $ is internally tangent to $C(\hat{x}, e_n, \vartheta_0) $, 
 \item The following inclusion holds: \begin{equation} \label{fatto3}  B_{\frac{5 \rho_k}{\vartheta^*}}(x_k) \subset B_{\frac{\vartheta^*}{8}}(\hat{x}), \end{equation}
\item $k(\rho)$ can be bounded from above as follows: \begin{equation} \label{432} k(\rho) \le \log \frac{\vartheta^* h_0}{5 \rho} +1.  \end{equation}
\end{enumerate}
}
Call $\rho_{k(\rho)}= \chi^{k(\rho)-1} \rho$; from (\ref{432}) we have that 
\begin{equation} \label{433}
\rho_{k(\rho)} \le \frac{\vartheta^* h_0}{5}.
\end{equation}
In what follows, in order to ease the notation, norms will be always understood as being $\mathbf{L}^2$ norms, so that $\|\cdot  \|_U$ will stand for $\normadue{\cdot}{U}$. \\
{\bf Step 3.} {\it For all $0<\rho \le \bar{\rho}$ and for all $x \in E$ such that $s \rho \le \mathrm{dist}(x, \partial E) \le \frac{\vartheta^*}{4}$, the following hold:
\begin{equation} \label{434}
\frac{\nor{\nabla u}{B_{\rho_{k(\rho)}} (x_{k(\rho)}) }}{\nor{\nabla u}{E}}
\le C \Bigg( \frac{\nor{\nabla u}{B_{\rho} (x)}}{\nor{\nabla u}{E}} \Bigg)^{\delta_\chi^{k(\rho)-1}},
\end{equation} 
\begin{equation} \label{435}
\frac{\nor{\nabla u}{B_{\rho} (x)} }{\nor{\nabla u}{E}}
\le C \Bigg( \frac{\nor{\nabla u}{B_{\rho_{k(\rho)}} (x_{\rho_{k(\rho)}})}}{\nor{\nabla u}{E}} \Bigg)^{\delta^{k(\rho)-1}},
\end{equation} 
where $C>0$ and $0<\delta_\chi<\delta<1$ only depend on $M_0$. 
}
\begin{proof}[Proof of Step 3.]
We apply to $\nabla u$ the three-spheres inequality, with balls of center $x_j$ and radii $r_1^{j}=\rho_j$, $r_2^{j}=3\chi \rho_j$, $r_3^{j}=4 \chi\rho_j$, for all $j=1, \dots, k(\rho)-1$. Since 
$B_{r_1^{j+1}}(x_{j+1}) \subset B_{r_2^j}(x_j)$, 
by the three spheres inequality, there exists $C$ and $\delta_\chi$ only depending on $M_0$, such that:
\begin{equation} \label{437}  \nor{\nabla u}{B_{\rho_{j+1}}(x_{j+1})}  \le C \Big(\nor{\nabla u}{B_{\rho_j}(x_j) }\Big)^{\delta_\chi}  \Big(\nor{\nabla u}{B_{4\chi\rho_j}(x_j)}\Big) ^{1-\delta_\chi}. \end{equation}
This, in turn, leads to: 
\begin{equation} \label{438} \frac{\nor{ \nabla u}{B_{\rho_{j+1}}(x_{j+1})} }{\nor{\nabla u}{E}} \le C \Bigg(\frac{\nor{ \nabla u}{B_{\rho_j}(x_j)} }{\nor{\nabla u}{E}}  \Bigg)^{\delta_\chi},\end{equation} 
for all $j=0, \dots k(\rho)-1$. 
Now call 
\begin{displaymath} 
m_k = \frac{\nor{ \nabla u}{B_{\rho_{j+1}}(x_{j+1})} }{\nor{\nabla u}{E}}. 
\end{displaymath}
so that (\ref{438}) reads
\begin{equation} \label{stepdue}
m_{k+1} \le C m_k^{\delta_\chi} \,  \nor{\nabla u}{E}^{1-\delta_\chi} , 
\end{equation}
which, inductively, leads to \begin{equation} \label{steptre}
m_{N} \le \tilde{C} m_0^\alpha, 
\end{equation}
where $\tilde{C} = C^{1+\delta_\chi+ \dots+ \delta_\chi^{k(\rho)-2}}$. Since $ 0<\delta_\chi <1$, we have $1+\delta_\chi+ \dots+ \delta_\chi^{k(\rho)-2} \le \frac{1}{1-\delta_\chi}$, and since we may take $C>1$, 
\begin{equation} \label{stepquattro} 
\tilde{C} \le C^{\frac{1}{1-\delta_\chi}}.
\end{equation}
 Similarly, we obtain (\ref{435}): we find a $0<\delta<1$ such that the three spheres inequality applies to the balls $B_{\rho_j}(x_j)$, $B_{3\rho_j}(x_j)$ $B_{4\rho_j}(x_j)$ for $j=2,\dots, k(\rho)$; observing that $B_{\rho_{j}(x_{j-1})} \subset B_{3\rho_j}(x_j)$, the line of reasoning followed above applies identically. 
\end{proof}
{\bf Step 4.}  
\\{\it For all $0<\rho \le \overline{\rho}$, and for every $\bar{x} \in E_{s\rho}$  we have   \begin{equation}\label{453}
\frac{\nor{\nabla u}{B_{\rho}(y)}}{\nor{\nabla u}{E}}
\le C \Bigg( \frac{\nor{\nabla u}{B_\rho (\bar{x})}}{\nor{\nabla u}{E}} \Bigg)^{ \delta_\chi^{A+B\log \frac{1}{\rho}}} .
\end{equation} } 
\begin{proof} We distinguish two subcases:
\begin{enumerate}
 \item[\it (i).] $\bar{x}$ is such that $\mathrm{dist} (\bar{x}, \partial E) \le \frac{\vartheta^*}{4}$, 
\item[\it (ii).] $\bar{x}$ is such that $\mathrm{dist}(\bar{x}, \partial E) > \frac{\vartheta^*}{4}$.
\end{enumerate}
 {\it Proof of Case (i).}
Let us consider $\delta$, $\delta_\chi$ we introduced in Step 3
. Take any point $y \in E$ such that $s \rho < \mathrm{dist}(y, \partial E) \le \frac{\vartheta^*}{4}$.
By construction, the set $E_{\frac{5\rho_{k(\rho)}}{\vartheta^*}}$ is connected, thus there exists a continuous path $\gamma : [0,1] \to E_{\frac{5\rho_{k(\rho)}}{\vartheta^*}}$ joining $\bar{x}_{k(\rho)}$ to $y_{k(\rho)}$. We define a ordered sequence of times $t_j$, and a corresponding sequence of points $x_j= \gamma(t_j)$, for $j=1, \dots, L$ in the following way: $t_1=0$, $t_L =1$, and
\begin{displaymath} 
t_j= \mathrm{max} \{t\in (0,1]  \text{  such that  } |\gamma(t)- x_i| = 2 \rho_{k(\rho)} \} \; \text{, if } |x_i-y_{k(\rho)}| > 2 \rho_{k(\rho)}, 
\end{displaymath}
otherwise, let $k=L$ and the process is stopped. Now, all the balls $B_{\rho_{k(\rho)}}(x_i)$ are pairwise disjoint, the distance between centers $| x_{j+1}-x_j | = 2 \rho_{k(\rho)}$ for all $j=1 \dots L-1$ and for the last point, $|x_L - y_{k(\rho)}| \le 2 \rho_{k(\rho)}$. The number of points, using (\ref{apriori2}), is at most
\begin{equation} \label{sferealmassimo} L \le \frac{M_1}{\omega_n \rho_{k(\rho)}^n}.  \end{equation} 
Iterating the three spheres inequality over this chain of balls, we obtain 
\begin{equation} \label{442}
\frac{\nor{\nabla u}{B_{\rho_{k(\rho)}}(y_{k(\rho)})}}{\nor{\nabla u}{E}}  \le C \Bigg( \frac{\nor{\nabla u}{B_{\rho_{k(\rho)}}(\bar{x}_{k(\rho)}) }}{\nor{\nabla u}{E}} \Bigg)^{\delta^L}
\end{equation}
On the other hand, by the previous step we have, applying (\ref{434}) and (\ref{435}) for $x=\bar{x}$ and $x=y$ respectively, 
\begin{equation} \label{443}
\frac{\nor{\nabla u}{B_{\rho_{k(\rho)}} (\bar{x}_{k(\rho)}) }}{\nor{\nabla u}{E}}
\le C \Bigg( \frac{\nor{\nabla u}{B_{\rho} (\bar{x})}}{\nor{\nabla u}{E}} \Bigg)^{\delta_\chi^{k(\rho)-1}},
\end{equation}
\begin{equation} \label{444}
\frac{\nor{\nabla u}{B_{\rho}(y)} }{\nor{\nabla u}{E}}
\le C \Bigg( \frac{\nor{\nabla u}{B_{\rho_{k(\rho)}} (y_{k(\rho)})}}{\nor{\nabla u}{E}} \Bigg)^{\delta^{k(\rho)-1}},
\end{equation}
where $C$, as before, only depends on $n$ and $M_0$. Combining (\ref{442}), (\ref{443}) and (\ref{444}), we have 
\begin{equation} \label{445}
\frac{\nor{\nabla u}{B_{\rho}(y)} }{\nor{\nabla u}{E}}
\le C \Bigg( \frac{\nor{\nabla u}{B_{\rho} (\bar{x})}}{\nor{\nabla u}{E}} \Bigg)^{\delta_\chi^{k(\rho)-1} \delta^{k(\rho)+L-1}},
\end{equation}
for every $y \in E_{s\rho}$ satisfying $\mathrm{dist} (y, \partial E) \le \frac{\vartheta^*}{4}$. Now consider $y \in E$ such that $\mathrm{dist} (y, \partial E) > \frac{\vartheta^*}{4}$.
Call \begin{equation} \label{446}
\tilde{r}= \vartheta^* \rho_{k(\rho)}.
\end{equation}
By construction (\ref{433}) and (\ref{fatto3}) we have 
\begin{equation}
\mathrm{dist}(\bar{x}_{k(\rho)}, \partial E) \ge \frac{5 \rho_{k(\rho)}}{\vartheta^*} > \frac{5}{\vartheta^*} \tilde{r} ,
\end{equation}
\begin{equation}
\mathrm{dist}(y, \partial E) \ge \frac{5 \rho_{k(\rho)}}{\vartheta^*} > \frac{5}{\vartheta^*} \tilde{r},
\end{equation}
and again $E_{\frac{5}{\vartheta^*} \tilde{r}}$ is connected, since $\tilde{r}< \rho_{k(\rho)}$.  We are then allowed to join $\bar{x}_{k(\rho)}$ to $y$ with a continuous arc, and copy the argument seen before over a chain of at most $\tilde{L}$ balls of centers $x_j \in E_{\frac{5}{\vartheta^*} \tilde{r}}$ and radii $\tilde{r}$, $3\tilde{r}$, $4\tilde{r}$, where \begin{equation} \label{sferealmassimotilde} \tilde{L} \le \frac{M_1}{\omega_n \tilde{r}^n}.  \end{equation} 
Up to possibly shrinking $\overline{\rho}$, we may suppose $\rho \le \tilde{r}$; iterating the three spheres inequality as we did before, we get
\begin{equation}
\label{451}
\frac{\nor{\nabla u}{B_{\tilde{r}}(y)} }{\nor{\nabla u}{E}}
\le C \Bigg( \frac{\nor{\nabla u}{B_{\tilde{r}} (\bar{x}_{k(\rho)})}}{\nor{\nabla u}{E}} \Bigg)^{ \delta^{\tilde{L}}},
\end{equation}
which, in turn, by (\ref{443}) and since $\rho \le \tilde{r} < \rho_{k(\rho)}$,  becomes
\begin{equation} \label{452}
\frac{\nor{\nabla u}{B_{\rho}(y)}}{\nor{\nabla u}{E}}
\le C \Bigg( \frac{\nor{\nabla u}{B_\rho (\bar{x})}}{\nor{\nabla u}{E}} \Bigg)^{ \delta_\chi^{k(\rho)-1}\delta^{\tilde{L}}},
\end{equation}
with $C$ depending only on $M_0$ and $n$. The estimate (\ref{452}) holds for all $y \in E$ such that $\mathrm{dist} (y, \partial E) > \frac{\vartheta^*}{4}$. 
We now put (\ref{432}), (\ref{452}), (\ref{445}), (\ref{sferealmassimo})  (\ref{sferealmassimotilde}) together, by also observing that $\delta_\chi \le \delta$ and trivially $\frac{\nor{\nabla u}{B_{\rho}(y)}}{\nor{\nabla u}{E}} \le 1$, we obtain precisely (\ref{453}), for $\rho \le \overline{\rho}$, where $C>1$ and $B>0$ only depend on $M_0$, while $A>0$ only depend on $M_0$ and $M_1$.\\
{ \it Proof of Case (ii).} 
We use the same constants $\delta$ and $\delta_\chi$ introduced in Step 3. Take $\rho \le \bar{\rho}$, then  $B_{s\rho}(\bar{x}) \subset B_{\frac{\vartheta^*}{16}}(\bar{x})$, and for any point $\tilde{x}$ such that $|\bar{x} - \tilde{x}| = s \rho$, we have $B_{\frac{\vartheta^*}{8}}(\tilde{x}) \subset E$. Following the construction made in Steps 2 and 3, we choose a point $\bar{x}_{k(\rho)} \in E_{\frac{5}{\vartheta^*}\rho_{k(\rho)}}$, such that 
\begin{equation}
\frac{\nor{\nabla u}{B_{\rho_{k(\rho)}}(\bar{x}_{k(\rho)})}}{\nor{\nabla u}{E}}
\le C \Bigg( \frac{\nor{\nabla u}{B_\rho (\bar{x})}}{\nor{\nabla u}{E}} \Bigg)^{ \delta_\chi^{k(\rho)-1}}, \end{equation}
with $C>1$ only depending on $n$, $M_0$. 
If $y \in E$ is such that $s\rho <\mathrm{dist} (y, \partial E) \le \frac{\vartheta^*}{4}$, then, by the same reasoning as in Step 4.(i), we obtain 
\begin{equation}\label{459}
\frac{\nor{\nabla u}{B_{\rho}(y)}}{\nor{\nabla u}{E}}
\le C \Bigg( \frac{\nor{\nabla u}{B_\rho (\bar{x})}}{\nor{\nabla u}{E}} \Bigg)^{ \delta_\chi^{k(\rho)-1} \delta^{k(\rho)+L-1}}, \end{equation}
with $C>1$ again depending only on $M_0$. 
If, on the other hand, $y \in E$ is such that $\mathrm{dist}(y, \partial E) \ge \frac{\vartheta^*}{4}$, taking $\tilde{r}$ as in (\ref{446}), using the same argument as in Step 4.(i), we obtain 
\begin{equation}\label{460}
\frac{\nor{\nabla u}{B_{\rho}(y)}}{\nor{\nabla u}{E}}
\le C \Bigg( \frac{\nor{\nabla u}{B_\rho(\bar{x})}}{\nor{\nabla u}{E}} \Bigg)
^{\delta_\chi^{k(\rho)-1} \delta^{\tilde{L}}},
\end{equation}
where again $C>1$ only depends on $M_0$. 
From (\ref{459}),(\ref{460}), (\ref{sferealmassimo}),(\ref{sferealmassimotilde}) and (\ref{432}), and recalling that, again, $\delta_\chi \le \delta$, and $\frac{\nor{\nabla u}{B_{\rho}(y)}}{\nor{\nabla u}{E}} \le 1$, we obtain 
\begin{equation}
\frac{\nor{\nabla u}{B_{\rho}(y)}}{\nor{\nabla u}{E}}
\le C \Bigg( \frac{\nor{\nabla u}{B_\rho (\bar{x})}}{\nor{\nabla u}{E}} \Bigg)^{ \delta_\chi^{A+B\log\frac{1}{\rho}} },
\end{equation}
where $C>1$ and $B>0$ only depend on  $M_0$, while $A>0$ only depends on  $M_0$, $M_1$. 
\end{proof}

{\bf Step 5.} {\it For every $\rho \le \bar{\rho}$ and for every $\bar{x} \in E_{s\rho}$ the thesis (\ref{POS}) holds. }
\begin{proof}[Proof of Step 5] 
Suppose at first that $\bar{x} \in E_{s\rho}$ satisfies $\mathrm{dist} (\bar{x}, \partial E) \le \frac{\vartheta^*}{4}$. 
We cover $E_{(s+1)\rho}$ with a sequence of non-overlapping cubes of side $l= \frac{2 \rho}{\sqrt{n}}$, so that every cube is contained in a ball of radius $\rho$ and center in $E_{s \rho}$. The number of cubes is bounded by
\begin{displaymath}
N= \frac{|\Omega|n^{\frac{n}{2}}}{(2\rho)^n} \le \frac{M_1 n^{\frac{n}{2}}}{(2 \rho)^n}.
\end{displaymath}
If we then sum over $k=0$ to $N$ in (\ref{453}) we can write:
\begin{equation} \label{stepcinque}
\frac{\nor{\nabla u}{E_{(s+1) \rho}}}{\nor{\nabla u}{E}} \le C \rho^{-\frac{n}{2}} \Biggr( \frac{\nor{\nabla u}{B_\rho(\bar{x})} }{\nor{\nabla u}{E}} \Biggr)^{\delta_\chi^{A+B\log \frac{1}{\rho}}} . 
\end{equation}
Here $C$ depends only on $M_0$.
Now, we need to estimate the left hand side in (\ref{stepcinque}). In order to do so, we start by writing 
\begin{equation} \label{unomeno} \frac{\nor{\nabla u}{E_{(s+1) \rho}}}{\nor{\nabla u}{E}} =1-\frac{\nor{\nabla u}{E \setminus E_{(s+1) \rho}}}{\nor{\nabla u}{E}}.
\end{equation}
By Lemma \ref{42} and the H\"older inequality,
\begin{equation} \label{buttatali} 
\nor{\nabla u}{E \setminus E_{(s+1)\rho}}^2 \le C F^2  \nor{u}{E \setminus E _{(s+1)\rho}}^2 \le C F^2 |E \setminus E _{(s+1)\rho}| ^{\frac{1}{n}} \| u\|^2_{\mathbf{L}^{\frac{2n}{n-1}}(E \setminus E _{(s+1)\rho})}.
\end{equation}
On the other hand, by the Sobolev  and the Poincar\`e inequalities:
\begin{equation} \label{buttatali2}
 \| u\|_{\mathbf{L}^{\frac{2n}{n-1}}(E 
)} \le C \norma{ u}{\frac{1}{2}}{E 
} \le C \nor{u}{E} \le C \nor{\nabla u} {E}.
\end{equation}
It can be proven (see \cite[Lemma 5.7]{ARRV}) that
\begin{equation} \label{buttatali3}
|E \setminus E_{(s+1)\rho }| \le C \rho,
\end{equation}
where $C$ depends on $M_0$, $M_1$ and $n$. We thus obtain that 
\begin{equation} \label{storysofar}
\frac{\nor{\nabla u}{E \setminus E_{(s+1)\rho}}}{\nor{\nabla u} {E}} \le C F^2 |E \setminus E_{(s+1)\rho}|^{\frac{1}{n}}.
\end{equation}
Therefore, combining
(\ref{storysofar}) and (\ref{buttatali3}), we have that for $\rho \le \bar{\rho}$, 
\begin{equation} \label{unmezzo} 
\frac{\nor{\nabla u}{E _{(s+1) \rho}}}{\nor{\nabla u}{E}} \le \frac{1}{2}, 
\end{equation}
which, inserted into (\ref{stepcinque}) yields
\begin{equation*}
\int_{B_\rho(\bar{x})} |\nabla u|^2 \ge C \rho^{n\delta_\chi^{-A-B\log\frac{1}{\rho}}} \int_E |\nabla u|^2.
\end{equation*}
Since for all $t>0$ we have $|\log t| \le \frac{1}{t}$, it is immediate to verify that (\ref{POS}) holds.
Now take $\bar{x} \in E_{s\rho}$ such that $\mathrm{dist}(\bar{x}, \partial E) > \frac{\vartheta^*}{4}$.  Then  $B_{s\rho}(\bar{x}) \subset B_{\frac{\vartheta^*}{16}}(\bar{x})$, then for any point $\tilde{x}$ such that $|\bar{x} - \tilde{x}| = s \rho$, we have $B_{\frac{\vartheta^*}{8}}(\tilde{x}) \subset E$. Following the construction made in Steps 2 and 3, we choose a point $\bar{x}_{k(\rho)} \in E_{\frac{5}{\vartheta^*}\rho_{k(\rho)}}$, such that 
\begin{equation}
\frac{\nor{\nabla u}{B_{\rho_{k(\rho)}}(\bar{x}_{k(\rho)})}}{\nor{\nabla u}{E}}
\le C \Bigg( \frac{\nor{\nabla u}{B_\rho (\bar{x})}}{\nor{\nabla u}{E}} \Bigg)^{ \delta_\chi^{k(\rho)-1}}, \end{equation}
with $C>1$ only depends on $n$, $M_0$. \\
If $y \in E$ is such that $s\rho <\mathrm{dist} (y, \partial E) \le \frac{\vartheta^*}{4}$, then, by the same reasoning as in Step 4, we obtain 
\begin{equation}\label{4591}
\frac{\nor{\nabla u}{B_{\rho}(y)}}{\nor{\nabla u}{E}}
\le C \Bigg( \frac{\nor{\nabla u}{B_\rho (\bar{x})}}{\nor{\nabla u}{E}} \Bigg)^{ \delta_\chi^{k(\rho)-1} \delta^{k(\rho)+L-1}}, \end{equation}
with $C>1$ again depending only on $n$ and $M_0$. 
If, on the other hand, $y \in E$ is such that $\mathrm{dist}(y, \partial E) \ge \frac{\vartheta^*}{4}$, taking $\tilde{r}$ as in (\ref{446}), using the same argument as in Step 4, we obtain 
\begin{equation}\label{4601}
\frac{\nor{\nabla u}{B_{\rho}(y)}}{\nor{\nabla u}{E}}
\le C \Bigg( \frac{\nor{\nabla u}{B_\rho(\bar{x})}}{\nor{\nabla u}{E}} \Bigg)
^{\delta_\chi^{k(\rho)-1} \delta^{\tilde{L}}},
\end{equation}
where again $C>1$ only depends on $n$ and $M_0$. 
From (\ref{4591}),(\ref{4601}), (\ref{sferealmassimo}),(\ref{sferealmassimotilde}) and (\ref{432}), and recalling that, again, $\delta_\chi \le \delta$, and $\frac{\nor{\nabla u}{B_{\rho}(y)}}{\nor{\nabla u}{E}} \le 1$, we obtain 
\begin{equation}
\frac{\nor{\nabla u}{B_{\rho}(y)}}{\nor{\nabla u}{E}}
\le C \Bigg( \frac{\nor{\nabla u}{B_\rho (\bar{x})}}{\nor{\nabla u}{E}} \Bigg)^{ \delta_\chi^{A+B\log\frac{1}{\rho}} },
\end{equation}
where $C>1$ and $B>0$ only depend on $n$ and $M_0$, while $A>0$ only depends on $n$, $M_0$, $M_1$. 
The thesis follows from the same cube covering argument as in Step 4. 
\end{proof}
{\bf Conclusion.}  So far, we have proven (\ref{POS}) true for every $\rho \le \bar{\rho}$, and for every $\bar{x} \in E_{s \rho}$, where $\bar{\rho}$ only depends on $M_0$, $M_1$ and $F$. If $\rho > \bar{\rho}$ and $\bar{x} \in E_{s \rho} \subset  E_{s \bar{\rho}}$, then, using what we have shown so far,
\begin{equation} \label{462}
\nor{\nabla u}{B_\rho (\bar{x})} \ge \nor{\nabla u}{B_{\bar{\rho}}(\bar{x})} \ge \tilde{C} \nor{\nabla u}{E},
\end{equation}
where $\tilde{C}$ again only depends on $n$, $M_0$, $M_1$ and $F$. On the other hand, by the regularity hypotheses on $E$, it is easy to show that 
\begin{equation} \label{463}
\rho \le \frac{\mathrm{diam}(\Omega)}{2s} \le \frac{C^*}{2s}
\end{equation}
thus the thesis 
\begin{displaymath}
\int_{B_\rho (\bar{x})} |\nabla u|^2 \ge \frac{C}{\exp \Big[ A \Big(\frac{1}{\rho}\Big)^B\Big] } \int_E |\nabla u|^2 \end{displaymath}
is trivial, if we set \begin{displaymath} 
C = \tilde{C} \exp\Big[ A \Big( \frac{2s}{C^*}\Big)^B \Big].
\end{displaymath}
\end{proof}

\section{Stability of continuation from Cauchy data.}
Throughout this section, we shall again distinguish two domains $\Omega_i= \Omega \setminus \overline{D_i}$ for $i=1,2$, where $D_i$ are two subset of $\Omega$ satisfying (\ref{apriori2bis}) to (\ref{apriori4}). 
We start by putting up some notation. In the following, we shall call 
\begin{displaymath} U^i_\rho =\{x \in \overline{\Omega_i} \; \text{s.t.} \mathrm{dist}(x,\partial \Omega) \le \rho \}. \end{displaymath} 
The following are well known results of interior regularity for the bilaplacian (see, for example, \cite{Miranda}, \cite{GilTru}):
\begin{lemma}[Interior regularity of solutions] \label{teoschauder} Let $u_i$ be the weak solution to \ref{NSE} in $\Omega_i$. Then for all $0<\alpha<1$ we have that $u_i \in C^{1,\alpha}(\overline{\Omega_i \setminus U^i_{\frac{\rho_0}{8}}})$ and 
\begin{equation} \label{schauder1} \|u_i \|_{C^{1,\alpha}(\overline{ \Omega_i \setminus U^i_{\frac{\rho_0}{8}}})} \le C 
\norma{g}{\frac{1}{2}}{\Gamma}  \end{equation} 
\begin{equation} \label{schauder2} \|u_1-u_2 \|_{C^{1,\alpha}( \overline{\Omega_1 \cap \Omega_2})}  \le C 
\norma{g}{\frac{1}{2}}{\Gamma}  \end{equation} where $C>0$ only depends on $\alpha$, $M_0$.  \end{lemma}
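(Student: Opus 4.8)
The plan is to combine the \emph{a priori} energy estimate for the Stokes system with classical interior and boundary Schauder estimates, exploiting the fact established in Step~1 of the proof of Proposition~\ref{teoPOS} that each $u_i$ is biharmonic and each $p_i$ harmonic in $\Omega_i=\Omega\setminus\overline{D_i}$. First I would record the basic energy bound: applying the $m=-1$ case of Theorem~\ref{TeoRegGen} with $f=0$ to the problem solved by $u_i$ on $E=\Omega_i$, and using that both $u_i$ equal $g$ on $\partial\Omega$ (with $g$ extended by zero off $\Gamma$) and $u_i=0$ on $\partial D_i$, one gets $\norma{u_i}{1}{\Omega_i}\le c_0\,\norma{g}{\frac{1}{2}}{\partial\Omega}$, which by the equivalence~(\ref{equivalence}) yields $\norma{u_i}{1}{\Omega_i}\le C\,\norma{g}{\frac{1}{2}}{\Gamma}$. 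In particular $\normadue{u_i}{\Omega_i}\le C\,\norma{g}{\frac{1}{2}}{\Gamma}$ and $\normadue{u_1-u_2}{\Omega_1\cap\Omega_2}\le C\,\norma{g}{\frac{1}{2}}{\Gamma}$. This reduces both claimed inequalities to upgrading an $\mathbf L^2$ bound to a $C^{1,\alpha}$ bound.

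For~(\ref{schauder1}) I would cover $\overline{\Omega_i\setminus U^i_{\rho_0/8}}$ (after the usual normalization $\rho_0=1$) by two families of sets whose number and geometry are controlled by $M_0$ and $M_1$: balls $B_{\rho_0/c}(x)$ compactly contained in $\Omega_i$, and boundary half-balls centred at points of $\partial D_i$. On the interior balls, since $u_i$ is biharmonic, the cited interior regularity for the bilaplacian (\cite{Miranda},\cite{GilTru}) gives $\|u_i\|_{C^{1,\alpha}(B_{\rho_0/2c}(x))}\le C\,\normadue{u_i}{B_{\rho_0/c}(x)}$. On the boundary half-balls I would use that $u_i=0$ on $\partial D_i$, which is of class $C^{2,\alpha}$ by~(\ref{apriori3}); the Schauder boundary estimate for the Stokes system with this smooth homogeneous velocity datum gives $u_i\in C^{2,\alpha}$ up to $\partial D_i$, hence a $C^{1,\alpha}$ bound by the local $\mathbf L^2$ norm. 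Summing the finitely many local estimates and inserting the energy bound gives~(\ref{schauder1}), with $C$ depending only on $\alpha$ and $M_0$ (the covering contributing the dependence on $M_1$).

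For~(\ref{schauder2}) the decisive observation is that $w=u_1-u_2$ satisfies $\dive\sigma(w,p_1-p_2)=0$, $\dive w=0$ in $\Omega_1\cap\Omega_2$, and, since both $u_i$ equal $g$ on $\partial\Omega$, that $w=0$ on $\partial\Omega$. I would split $\overline{\Omega_1\cap\Omega_2}$ into the part lying at distance $>\rho_0/8$ from $\partial\Omega$ and a neighbourhood of $\partial\Omega$. On the former, every point belongs to both $\overline{\Omega_1\setminus U^1_{\rho_0/8}}$ and $\overline{\Omega_2\setminus U^2_{\rho_0/8}}$, so estimate~(\ref{schauder1}) applied to $u_1$ and $u_2$ together with the triangle inequality controls $\|w\|_{C^{1,\alpha}}$. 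On the latter, by~(\ref{apriori4}) the obstacles lie at distance $\ge\rho_0$ from $\partial\Omega$, so $\Omega_1\cap\Omega_2$ coincides with $\Omega$ near $\partial\Omega$; there $w$ solves the Stokes system with the homogeneous datum $w=0$ on the $C^{2,\alpha}$ surface $\partial\Omega$, and the boundary Schauder estimate again yields a $C^{1,\alpha}$ bound by $\normadue{w}{\Omega_1\cap\Omega_2}\le C\,\norma{g}{\frac{1}{2}}{\Gamma}$ up to $\partial\Omega$. A standard gluing of the local $C^{1,\alpha}$ bounds over the overlapping pieces produces~(\ref{schauder2}).

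The main obstacle is the boundary Schauder step: one must justify $C^{1,\alpha}$ (in fact $C^{2,\alpha}$) regularity up to $\partial D_i$ and up to $\partial\Omega$ for the Stokes system in merely $C^{2,\alpha}$ domains, with the constant controlled by $M_0$ and $\alpha$. This rests on the Agmon--Douglis--Nirenberg Schauder theory, equivalently on boundary regularity for the associated biharmonic representation with the boundary conditions induced by $u_i=0$ and $\dive u_i=0$, and is precisely where the $C^{2,\alpha}$ rather than merely $C^2$ hypothesis is genuinely used. A secondary point requiring care is that $\overline{\Omega_1\cap\Omega_2}$ may be geometrically irregular; however, since in~(\ref{schauder2}) one only ever invokes the \emph{global} estimate~(\ref{schauder1}) for each $u_i$ away from $\partial\Omega$ and the homogeneous estimate near $\partial\Omega$, the possible pinching of $\Omega_1\cap\Omega_2$ between $\partial D_1$ and $\partial D_2$ never enters, and the Hölder seminorm between points in distinct pieces of the cover is bounded by the already-controlled $C^{1}$ norm.
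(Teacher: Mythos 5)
Your proposal is correct and follows essentially the same route as the paper's proof: the a priori energy bound $\norma{u_i}{1}{\Omega_i}\le C\norma{g}{\frac{1}{2}}{\Gamma}$ from Theorem~\ref{TeoRegGen}, upgraded to $C^{1,\alpha}$ by local elliptic regularity for the biharmonic/Stokes system, and, for the difference, the observation that $u_1-u_2$ vanishes on all of $\partial\Omega$ so the same regularity argument applies in the collar near $\partial\Omega$ while (\ref{schauder1}) handles the remaining region. The only substantive difference is that you make explicit the boundary Schauder step at $\partial D_i$ (where $\overline{\Omega_i\setminus U^i_{\rho_0/8}}$ actually touches the obstacle, since $d(D_i,\partial\Omega)\ge\rho_0$) and at $\partial\Omega$, a point the paper subsumes under a citation of ``interior regularity estimates for biharmonic functions.''
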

\begin{proof}
Using standard energy estimates, as in Theorem \ref{TeoRegGen}, it follows that
\begin{equation} \label{stimau} \norma{u_i}{1}{\Omega_i} \le C 
\norma{g}{\frac{1}{2}}{\partial \Omega}. \end{equation}
On the other hand, using interior regularity estimates for biharmonic functions, we have 
\begin{equation} \label{intreg}
\|u_i \|_{C^{1,\alpha} (\overline{\Omega_i \setminus U^i_{\frac{\rho_0}{8}}})}  \le C \|u_i \|_{\mathbf{L}^{\infty} (\overline{\Omega_i \setminus U^i_{\frac{\rho_0}{16}}})}  \le
\normadue{u_i}{\Omega_i},
\end{equation}
where $C>0$ only depends on $\alpha$ and $M_0$. Combining (\ref{stimau}), (\ref{intreg}), and recalling (\ref{equivalence}), immediately leads to (\ref{schauder1}).
As for (\ref{schauder2}),  we observe that $u_1-u_2=0$ on $\Gamma$ (actually, on $\partial \Omega$); therefore, the $C^{1,\alpha}$ norm of $u_1-u_2$ in $U_{\frac{\rho_0}{2}}^1 \cap U_{\frac{\rho_0}{2}}^2$ can be estimated in the same fashion; using (\ref{schauder1}) in the remaining part, we get (\ref{schauder2}).
\end{proof}
We will also need the following lemma, proved in \cite{ABRV}:
\begin{lemma}[Regularized domains] \label{regularized} 
Let $\Omega$ be a domain satisfying (\ref{apriori1}) and (\ref{apriori2}), and let $D_i$, for $i=1,2$ be two connected open subsets of $\Omega$ satisfying (\ref{apriori3}), (\ref{apriori4}). Then there exist a family of regularized domains $D_i^h \subset \Omega$, for  $0 < h < a \rho_0$, with $C^1$ boundary of constants $\til{\rho_0}$, $\til{M_0}$  and such that 
\begin{equation} \label{643} D_i \subset D_i^{h_1} \subset D_i^{h_2} \; \text{ if  } 0<h_1 \le h_2; \end{equation}
\begin{equation} \label{644} \gamma_0 h \le \mathrm{dist}(x, \partial D_i) \le \gamma_1 h  \; \text{ for all   } x \in  \partial D_i^h; \end{equation}
\begin{equation} \label{645} \mathrm{meas}(D_i^h\setminus D_i)\le \gamma_2 M_1 \rho_0^2 h; \end{equation}
\begin{equation} \label{646} \mathrm{meas}_{n-1}(\partial D_i^h)\le \gamma_3 M_1 \rho_0^2; \end{equation}
and for every $x \in \partial D_i^h$ there exists $y \in \partial D_i$ such that 
\begin{equation} \label{647} |y-x|= \mathrm{dist}(x, \partial D_i), \; \; |\nu(x) - \nu(y)|\le \gamma_4 \frac{h^\alpha}{\rho_0^\alpha}; \end{equation}
where by $\nu(x)$ we mean the outer unit normal to $\partial D_i^h$, $\nu(y)$ is the outer unit normal to $D_i$, and the constants $a$, $\gamma_j$, $j=0 \dots 4$ and the ratios 
$\frac{\til{M}_0}{M_0}$, $\frac{\til{\rho}_0}{\rho_0}$ only depend on $M_0$ and $\alpha$.
\end{lemma}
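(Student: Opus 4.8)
The plan is to obtain $D_i^h$ by mollifying the signed distance function to $\partial D_i$ and taking a super-level set, which is the classical construction behind this type of lemma. \emph{Step 1: a uniform collar.} Since $\partial D_i$ is of class $C^{2,\alpha}$ with constants $\rho_0,M_0$, the normalization convention gives $\|D^2\varphi\|_\infty\le M_0/\rho_0$, so the principal curvatures of $\partial D_i$ are bounded by $CM_0/\rho_0$ and the reach of $\partial D_i$ is at least $c_0\rho_0$ for some $c_0=c_0(M_0)$. Hence the signed distance $d_i$, taken negative inside $D_i$ and positive outside, is well defined and of class $C^{2,\alpha}$ on the collar $\mathcal{N}_i=\{|d_i|<c_0\rho_0\}$, with $|\nabla d_i|\equiv 1$ there, with $\nabla d_i(x)=\nu(y)$ whenever $y\in\partial D_i$ is the nearest point to $x$, and with $[\nabla d_i]_{C^{0,\alpha}(\mathcal{N}_i)}\le C\rho_0^{-\alpha}$, $C=C(M_0)$. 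By (\ref{apriori4}) the outer half-collar $\mathcal{N}_i\cap\{d_i>0\}$ is compactly contained in $\Omega$ provided $h<a\rho_0$ with $a=a(M_0)$ small.

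\emph{Step 2: definition and elementary properties.} Fix a radial mollifier $\eta$ supported in the unit ball, put $\eta_r(x)=r^{-n}\eta(x/r)$, and let $d_i^{(h)}=d_i*\eta_{\beta h}$ on $\mathcal{N}_i$, where $\beta=\beta(M_0)$ is so small that every $\beta h$-ball centred in the region of interest stays inside $\mathcal{N}_i$. Define $D_i^h=\{x\in\Omega: d_i^{(h)}(x)<\lambda h\}$ for a constant $\lambda=\lambda(M_0)$. Since $d_i^{(h)}$ is smooth and, for $a$ small, $|\nabla d_i^{(h)}|$ stays close to $1$ on $\{d_i^{(h)}=\lambda h\}$, the boundary $\partial D_i^h$ is a smooth, in particular $C^1$, hypersurface; the bounds $|\nabla d_i^{(h)}-\nabla d_i|\le C(\beta h/\rho_0)^\alpha$ and $|\nabla^2 d_i^{(h)}|\le C\rho_0^{-1}$ furnish the uniform $C^1$ constants $\tilde\rho_0\sim\rho_0$, $\tilde M_0\sim M_0$. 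From $\|d_i^{(h)}-d_i\|_\infty\le C\beta h$ and $|\nabla d_i^{(h)}|\approx 1$ one reads off that on $\partial D_i^h$ the true distance to $\partial D_i$ lies in $[\gamma_0 h,\gamma_1 h]$, which is (\ref{644}); in particular $D_i\subset D_i^h$, and the nesting (\ref{643}) is obtained by taking the family of levels $\{\lambda h\}$ increasing and checking that the resulting collars are ordered.

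\emph{Step 3: measure, perimeter and normal bounds.} By the coarea formula, $\mathrm{meas}(D_i^h\setminus D_i)\le \gamma_1 h\,\sup_{0<t<\gamma_1 h}\mathcal{H}^{n-1}(\{d_i^{(h)}=t\})$, so (\ref{645}) follows once the level sets have area comparable to $\mathcal{H}^{n-1}(\partial D_i)$; this comparison, and (\ref{646}) itself, come from the area formula applied to the normal map $y\mapsto y+t\nu(y)$, whose Jacobian $\prod_j(1+t\kappa_j)$ is bounded for $t\le\gamma_1 h\le\gamma_1 a\rho_0$ because the curvatures $\kappa_j$ are $O(M_0/\rho_0)$ and $a$ is small, together with the global bound $\mathcal{H}^{n-1}(\partial D_i)\le C(M_0)M_1\rho_0^{n-1}$ obtained by covering $\partial D_i$ with graphs and invoking (\ref{apriori2}). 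Finally, for $x\in\partial D_i^h$ the outer normal is $\nu(x)=\nabla d_i^{(h)}(x)/|\nabla d_i^{(h)}(x)|$, and $\nabla d_i^{(h)}(x)=\int \nabla d_i(x-z)\,\eta_{\beta h}(z)\,dz$ is an average of $\nabla d_i=\nu(\cdot)$ over $B_{\beta h}(x)$ whose value at the centre is exactly $\nu(y)$ for the nearest point $y$; subtracting $\nu(y)$ and using the Hölder bound on $\nabla d_i$ gives $|\nu(x)-\nu(y)|\le C[\nabla d_i]_{C^{0,\alpha}}(\beta h)^\alpha\le\gamma_4(h/\rho_0)^\alpha$, which is (\ref{647}), with $|x-y|=\mathrm{dist}(x,\partial D_i)$ by construction. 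Tracking dependencies shows $a$, the $\gamma_j$ and the ratios $\tilde M_0/M_0$, $\tilde\rho_0/\rho_0$ depend only on $M_0$ and $\alpha$ (and $n$ through $\eta$), with $M_1$ entering only the global estimates (\ref{645})--(\ref{646}).

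\emph{Main obstacle.} The delicate point is reconciling the two competing demands on the regularization scale: the Hölder normal estimate (\ref{647}) forces the mollification radius to be proportional to $h$, whereas the monotone nesting (\ref{643}) is most naturally read off a single, $h$-independent regularization; making the $h$-indexed family genuinely nested while retaining the uniform $C^1$ constants and the two-sided distance control (\ref{644}) is where the construction must be set up carefully. Establishing the uniform lower bound on the reach, with the attendant uniform $C^{2,\alpha}$ estimates on $d_i$ depending only on $M_0$ and $\alpha$, is the prerequisite that has to be secured before any of the quantitative steps can be run.
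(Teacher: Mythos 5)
The paper does not prove this lemma at all: it is quoted verbatim from \cite{ABRV}, so the relevant comparison is with the construction there, which is based on Lieberman's \emph{regularized distance} --- a single, $h$-independent smoothing of the distance function obtained by mollifying at a scale proportional to the distance itself --- whose sublevel sets are then taken as the $D_i^h$. Your overall strategy (smooth the signed distance, take a level set, read off the distance, measure, perimeter and normal estimates from $\|d_i^{(h)}-d_i\|_\infty\lesssim h$ and the curvature bounds) is the right one, and Steps~1 and~3 are essentially sound modulo routine details.

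The genuine gap is exactly the one you flag at the end and then do not close: the monotone nesting (\ref{643}). With a mollification radius $\beta h$ that depends on $h$, the sets $D_i^{h_1}$ and $D_i^{h_2}$ are sublevel sets of \emph{different} functions, and the only comparison available is through the uniform error bound, giving
\begin{equation*}
\{d_i^{(h_1)}<\lambda h_1\}\subset\{d_i<(\lambda+C\beta)h_1\},\qquad \{d_i<(\lambda-C\beta)h_2\}\subset\{d_i^{(h_2)}<\lambda h_2\}.
\end{equation*}
This yields $D_i^{h_1}\subset D_i^{h_2}$ only when $h_2/h_1\ge(\lambda+C\beta)/(\lambda-C\beta)>1$, i.e.\ for pairs separated by a fixed multiplicative gap, not for all $0<h_1\le h_2$ as the statement requires; ``taking the family of levels increasing and checking the collars are ordered'' does not repair this, because the collars of the two different mollified functions can genuinely interleave at nearby values of $h$. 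The standard way out --- and the one used in \cite{ABRV} --- is to replace your family $d_i^{(h)}$ by one fixed function $\tilde d_i$ with $c_1 d_i\le\tilde d_i\le c_2 d_i$, constructed by mollifying at a scale comparable to the distance itself (Stein/Lieberman), and to set $D_i^h=D_i\cup\{\tilde d_i<\lambda h\}$: nesting is then automatic, while near the level $\{\tilde d_i=\lambda h\}$ the effective smoothing scale is $\sim h$, so the two-sided distance bound (\ref{644}), the uniform $C^1$ character of the level sets, and the normal estimate (\ref{647}) survive with the same proofs you give. Without this (or an equivalent device producing an $h$-independent defining function), your construction proves a weaker statement than (\ref{643}).
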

We shall also need a stability estimate for the Cauchy problem associated with the Stokes system with homogeneous Cauchy data. The proof of the following result, which will be given in the next section, basically revolves around an extension argument. Let us consider a bounded domain $E\subset \mathbb{R}^n$ satisfying hypotheses (\ref{apriori1}) and (\ref{apriori2}), and take $\Gamma \subset \partial E$ a connected open portion of the boundary of  class $C^{2, \alpha}$ with constants $\rho_0$, $M_0$. Let $P_0 \in \Gamma$ such that (\ref{apriori2G}) holds. By definition, after a suitable change of coordinates we have that $P_0 = 0$ and 
\begin{equation} 
E \cap B_{\rho_0}(0) = \{ (x^\prime, x_n) \in E  \, \text{ s.t.} \, x_n>\varphi(x^\prime)  \} \subset E,
\end{equation}
where $\varphi$ is a $C^{2,\alpha}(B^\prime_{\rho_0}(0))$ function satisfying 
\begin{displaymath}
 \begin{split}
  \varphi(0)&=0, \\ 
|\nabla \varphi (0)|&=0, \\ 
\|\varphi \|_{C^{2,\alpha} (B^\prime_{\rho_0}(0))}& \le M_0 \rho_0.
 \end{split}
\end{displaymath}
 Define
\begin{equation} \begin{split} \label{rho00}
 \rho_{00} & = \frac{\rho_0}{\sqrt{1+M_0^2}}, \\ \Gamma_0 & = \{ (x^\prime, x_n)   \in \Gamma \, \, \mathrm{s.t.} \, \, |x^\prime|\le \rho_{00}, \, \,  x_n = \varphi(x^\prime) \}.
\end{split} \end{equation}
\begin{theorem} \label{stabilitycauchy}
Under the above hypotheses, let $(u,p)$ be a solution to the problem:
\begin{equation}
  \label{NseHomDir} \left\{ \begin{array}{rl}
    \dive \sigma(u,p) & = 0 \hspace{2em} \mathrm{\tmop{in}} \hspace{1em}
    E,\\
    \dive u & = 0 \hspace{2em} \mathrm{\tmop{in}} \hspace{1em} E,\\
    u & = 0 \hspace{2em} \mathrm{\tmop{on}} \hspace{1em} \Gamma,\\
    \sigma (u, p) \cdot \nu & = \psi \hspace{2em} \mathrm{\tmop{on}}
    \hspace{1em} \Gamma,\\  \end{array} \right.
\end{equation}
where $\psi \in \accan{-\frac{1}{2}}{\Gamma}$. Let $P^* = P_0 + \frac{\rho_{00}}{4} \nu$ where $\nu$ is the outer normal field to $\partial \Omega$. Then we have 
\begin{equation} \label{NseHomDirEqn}
\| u \|_{{\bf L}^\infty(E \cap B_{\frac{3 \rho_{00}}{8}} (P^*))} \leq \frac{C}{\rho_0^{\frac{n}{2}}} \normadue{u}{E}^{1-\tau} (\rho_0 \norma{\psi}{-\frac{1}{2}}{\Gamma})^\tau,
\end{equation}
where $C>0$ and $\tau$ only depend on $\alpha$ and $M_0$. 
\end{theorem}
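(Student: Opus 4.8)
The plan is to reduce the statement to an interior three-spheres argument applied to a solution that has been continued a little across the piece $\Gamma_0$ of the boundary. As usual we may assume $\rho_0=1$, the general case following by rescaling the biharmonic equation as in the proof of Proposition \ref{teoPOS}. The overall scheme is: (i) extend $(u,p)$ just past $\Gamma_0$ to a solution of a Stokes-type system on an enlarged domain, in such a way that the extension is quantitatively controlled by the Cauchy datum $\psi$; (ii) run the three spheres inequality, Theorem \ref{teotresfere}, on concentric balls centred at the exterior point $P^*$ to interpolate the size of $\nabla u$ near $P^*$ between its smallness in the exterior collar and its global bound; (iii) convert the resulting $\mathbf{L}^2$ bound on $\nabla u$ into the desired $\mathbf{L}^\infty$ bound on $u$ by interior regularity.

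First I would invoke the extension argument, Proposition \ref{teoextensionNSE}. Since $u=0$ on $\Gamma$, the only nontrivial Cauchy datum is $\psi=\sigma(u,p)\cdot\nu$. The extension produces a collar $E^-$ attached to $E$ along $\Gamma_0$, an enlarged domain $\widetilde E= E\cup\overline{\Gamma_0}\cup E^-$, and a pair $(\widetilde u,\widetilde p)$ with $\widetilde u=u$, $\widetilde p=p$ in $E$, and with $\widetilde u$ of class $H^2$ up to and across $\Gamma_0$ solving a system of the form (\ref{ellgeneral}) in $\widetilde E$; thus, by Step 1 of the proof of Proposition \ref{teoPOS}, $\nabla\widetilde u$ is a weak biharmonic field throughout $\widetilde E$ and Theorem \ref{teotresfere} applies to it there. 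The crucial quantitative output is that, because the Dirichlet datum vanishes, the extended field is small in the collar,
\begin{equation*}
\normadue{\widetilde u}{E^-}\le C\,\norma{\psi}{-\frac{1}{2}}{\Gamma},
\end{equation*}
while globally $\normadue{\widetilde u}{\widetilde E}\le C\big(\normadue{u}{E}+\norma{\psi}{-\frac{1}{2}}{\Gamma}\big)$.

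Next I would fix three concentric balls centred at $P^*=P_0+\frac{\rho_{00}}{4}\nu$, which by construction lies in the collar $E^-$. Choose radii $r_1<r_2<r_3$ with $r_2\ge\frac{3\rho_{00}}{8}$ and $r_2<\vartheta^*r_3$, with $r_1$ small enough that $B_{r_1}(P^*)\subset E^-$ and $r_3$ such that $B_{r_3}(P^*)\subset\widetilde E$; the geometry of $\Gamma_0$ and the size $\rho_{00}$ guarantee such a choice with ratios depending only on $M_0$. Applying Theorem \ref{teotresfere} to $\nabla\widetilde u$ and then using Caccioppoli on $E^-$ to bound the first factor by the collar smallness, and the global bound for the third, gives
\begin{equation*}
\int_{B_{r_2}(P^*)}|\nabla\widetilde u|^2\le C\,\big(\norma{\psi}{-\frac{1}{2}}{\Gamma}\big)^{2\delta}\,\normadue{u}{E}^{2(1-\delta)},
\end{equation*}
with $\delta\in(0,1)$ depending only on $n$ and $M_0$. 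Finally, since $E\cap B_{\frac{3\rho_{00}}{8}}(P^*)\subset B_{r_2}(P^*)$ and $\nabla\widetilde u$ is biharmonic, the interior Schauder/$\mathbf{L}^\infty$ estimates for the biharmonic operator used in Lemma \ref{teoschauder} and Theorem \ref{TeoRegGen} upgrade this $\mathbf{L}^2$ bound to a pointwise bound for $u$ on $E\cap B_{\frac{3\rho_{00}}{8}}(P^*)$, yielding (\ref{NseHomDirEqn}) with $\tau=\delta$ once the $\rho_0^{-n/2}$ normalisation is restored.

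The main obstacle is step (i). The extension must handle the velocity and the pressure simultaneously; it must land in a regularity class high enough that $\nabla\widetilde u$ be genuinely (weakly) biharmonic \emph{across} $\Gamma_0$, which forces the matching of $u$, of $\nabla u$, and of the stress across $\Gamma_0$ so that no distributional source appears on the interface — a naive extension by zero fails here precisely because $\partial_\nu u$ does not vanish on $\Gamma$; and it must do so with the collar norm controlled by $\psi$ alone. This is exactly the content of Proposition \ref{teoextensionNSE}, and it is the step where prescribing $\psi$ (rather than $g$) on the accessible boundary is essential.
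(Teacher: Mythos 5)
Your overall architecture (extend across $\Gamma_0$, run the three spheres inequality on balls centred at the exterior point $P^*$, use smallness in the collar for the small ball and the global bound for the large one, then upgrade to $\mathbf{L}^\infty$ by interior regularity) is the paper's architecture. But step (i) as you state it contains a genuine gap, and it is precisely the step the paper resolves differently. You require an extension $\til{u}$ that is $\mathbf{H}^2$ \emph{across} $\Gamma_0$ and solves a system of the form (\ref{ellgeneral}) on all of $\til{E}$, i.e.\ with ``no distributional source on the interface'', so that $\nabla\til{u}$ is weakly biharmonic throughout $\til{E}$ and Theorem \ref{teotresfere} applies to it directly. Such an extension would have to match $u$, $\nabla u$ and the stress across $\Gamma_0$ while being quantitatively controlled by $\psi$ alone; this amounts to solving the Cauchy problem itself with Lipschitz-type control, which is not available (and would contradict the logarithmic character of the problem). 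Proposition \ref{teoextensionNSE} does something strictly weaker: it produces $(\til{u},\til{p})$ with $\til{u}\in\accan{1}{\til{E}}$ only, solving $\triangle\til{u}+\nabla\til{p}=\Phi$ where $\Phi$ is an $\accan{-1}{\til{E}}$ functional that \emph{does} contain surface terms supported on $\Gamma_0$ (the functionals $\Phi_1$, $\Phi_2$ in its proof), with only the bound (\ref{stimaPhi}) $\norma{\Phi}{-1}{\til{E}}\le C\eta/\rho_0$. In particular $\nabla\til{u}$ jumps across $\Gamma_0$ and is not biharmonic there, so the three spheres inequality cannot be applied to $\nabla\til{u}$ on balls crossing the interface, and your central interpolation step fails as written.

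The missing idea is the linear splitting $\til{u}=u_0+w$: one takes $w\in\accano{1}{\til{E}}$ solving $\dive\sigma(w,q)=\Phi$ in $\til{E}$, so that by energy estimates $\norma{w}{1}{\til{E}}\le C\rho_0\norma{\Phi}{-1}{\til{E}}\le C\eta$ is \emph{small}, while $u_0=\til{u}-w$ solves the homogeneous Stokes system in all of $\til{E}$ and is therefore genuinely biharmonic there. The three spheres inequality is applied to $u_0$ (the paper applies it to $u_0$ itself rather than to its gradient, which spares you the Caccioppoli step), and the collar smallness you want is recovered a posteriori: since $g=0$, the velocity extension satisfies $u^-\equiv 0$ in $E^-$, hence $\normadue{u_0}{B_{r_1}(P^*)}\le\normadue{\til{u}}{B_{r_1}(P^*)}+\normadue{w}{B_{r_1}(P^*)}\le C\eta$ with $\eta=\rho_0\norma{\psi}{-\frac{1}{2}}{\Gamma}$ — note that the smallness is carried by $w$, not by $\til{u}$ as in your proposal, where you attribute a bound $\normadue{\til u}{E^-}\le C\norma{\psi}{-\frac12}{\Gamma}$ to the extension itself that the extension lemma does not provide. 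Triangle inequalities then transfer the interpolation estimate for $u_0$ back to $u$ on $B_{r_2}(P^*)\cap E$. With this decomposition inserted, the rest of your outline (choice of radii, passage to $\mathbf{L}^\infty$) goes through as in the paper.
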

\begin{proof}[Proof of Proposition \ref{teostabest}]
Let $\theta= \mathrm{min} \{a, \frac{7}{8 \gamma_1} \frac{\rho_{0}}{2\gamma_0 (1+M_0^2)} \}$ where $a$, $\gamma_0$, $\gamma_1$ are the constants depending only on $M_0$ and $\alpha$ introduced in Lemma \ref{regularized}, then let $\overline{\rho}= \theta \rho_0$ and fix $\rho \le \overline{\rho}$.
We introduce the regularized domains $D_1^\rho$, $D_2^\rho$ according to Lemma \ref{regularized}. Let $G$ be the connected component of $\Omega\setminus(\overline{D_1 \cup D_2})$ which contains $\partial \Omega$, and $G^\rho$ be the connected component of $\overline{\Omega}\setminus(D_1^\rho \cup D_2^\rho)$ which contains $\partial \Omega$.
We have  that \begin{equation*}
D_2 \setminus \overline{D_1} \subset \Omega_1 \setminus \overline{G} \subset \big( (D_1^\rho \setminus \overline{D_1} ) \setminus\overline{G}\big) \cup \big( (\Omega \setminus G^\rho)\setminus D_1^\rho \big)
\end{equation*} 
and 
\begin{equation*}
\partial \big( (\Omega \setminus G^\rho)\setminus D_1^\rho \big) = \Gamma_1^\rho \cup \Gamma_2^\rho,
\end{equation*}
where $\Gamma_2^\rho= \partial D_2^\rho \cap \partial G^\rho$ and $\Gamma_1^\rho \subset \partial D_1^\rho$. It is thus clear that  
\begin{equation} \label{652} \int_{D_2 \setminus \overline{D_1 }} |\nabla u_1|^2 \le \int_{\Omega_1 \setminus \overline{G}} |\nabla u_1|^2 \le \int_{(D_1^\rho \setminus \overline{D_1} )\setminus\overline{G}} |\nabla u_1|^2 +\int_{(\Omega \setminus G^\rho)\setminus D_1^\rho} |\nabla u_1|^2. \end{equation} 
The first summand is easily estimated, for using (\ref{schauder1}) and (\ref{645}) we have 
\begin{equation} \label{6.53} \int_{(D_1^\rho \setminus \overline{D_1} )\setminus\overline{G}} |\nabla u_1|^2 \le C \rho_0^{n-2} \norma{g}{\frac{1}{2}}{\Gamma}^2 \frac{\rho}{\rho_0} \end{equation}
where $C$ only depends on the $M_0$, $M_1$ and $\alpha$.
We call $\Omega(\rho)= (\Omega \setminus G^\rho)\setminus D_1^\rho$. The second term in (\ref{652}), using the divergence theorem twice, becomes:
\begin{equation} \label{sommandi} \begin{split} & \int_{\Omega(\rho)} |\nabla u_1|^2 = \int_{\partial\Omega(\rho)} (\nabla u_1 \cdot \nu) u_1 - \int_{\Omega(\rho)} \triangle u_1 \cdot u_1 = \\& \int_{\partial\Omega(\rho)} (\nabla u_1 \cdot \nu) u_1  - \int_{\Omega(\rho)} \nabla p_1 \cdot u_1 =  \int_{\partial\Omega(\rho)} (\nabla u_1 \cdot \nu) u_1  + \int_{\partial \Omega(\rho)} p_1  (u_1\cdot \nu) = \\ & \int_{\Gamma_1^\rho}(\nabla u_1 \cdot \nu) u_1  + \int_{\Gamma_2^\rho}(\nabla u_1 \cdot \nu) u_1 + \int_{\Gamma_1^\rho}  p_1  (u_1 \cdot \nu) +\int_{\Gamma_2^\rho}  p_1  (u_1 \cdot \nu) . \end{split} \end{equation} 
About the first and third term, if $x \in \Gamma_1^\rho$, using Lemma \ref{regularized}, we find $y \in \partial D_1$ such that $|y-x|= d(x, \partial D_1) \le \gamma_1 \rho$; since $u_1(y)=0$, by Lemma \ref{teoschauder} we have 
\begin{equation} \label{pezzobuono}  |u_1(x)|= |u_1(x)-u_1(y)|\le  C \frac{\rho}{\rho_0}  \norma{g}{\frac{1}{2}}{\Gamma} . \end{equation}
On the other hand, if $x \in \Gamma_2^\rho$, there exists $y \in D_2$ such that $|y-x| = d(x, \partial D_2) \le \gamma_1 \rho$. Again, since $u_2(y)=0$, we have 
\begin{equation} \label{pezzocattivo} \begin{split} & |u_1(x)| \le  |u_1(x)-u_1(y)|+|u_1(y)-u_2(y) |   \\ 
& \le C  \big( \frac{\rho}{\rho_0} \norma{g}{\frac{1}{2}}{\Gamma} + \max_{\partial G^\rho \setminus \partial \Omega} |w| \big) , \end{split}\end{equation} 
where $w=u_1-u_2$.  Combining (\ref{pezzobuono}), (\ref{pezzocattivo}) and (\ref{sommandi}) and recalling (\ref{schauder1}) and (\ref{646}) we have:
\begin{equation} \label{sommandi2}
\int_{D_2\setminus D_1} |\nabla u_1|^2  \le C\rho_0^{n-2} \Big(  \norma{g}{\frac{1}{2}}{\Gamma}^2 \frac{\rho}{\rho_0} + \norma{g}{\frac{1}{2}}{\Gamma}  \max_{\partial G^\rho \setminus \partial \Omega} |w|  \Big)
\end{equation}
We now need to estimate $\max_{\partial G^\rho \setminus \partial \Omega} |w| $.  We may apply (\ref{tresfere}) to $w$, since it is biharmonic. Let $ x \in \partial G^\rho \setminus \partial \Omega$ and \begin{equation} \label{rhostar} \rho^*=\frac{\rho_0}{16(1+M_0^2)}, \end{equation}
\begin{equation}\label{zetazero}
x_0= P_0 - \frac{\rho_1}{16}\nu,
\end{equation}
where $\nu$ is the outer normal to $\partial \Omega$ at the point $P_0$. By construction $x_0 \in \overline{\til{\Omega}_{\frac{\rho^*}{2}}}$. 
There exists an arc $\gamma:  [0,1]  \mapsto G^\rho \setminus \overline{\til{\Omega}_{\frac{\rho^*}{2}}} $ such that $\gamma(0)=x_0$, $\gamma(1)=x$ and $\gamma([0,1])\subset G^\rho \setminus \overline{\til{\Omega}_{\frac{\rho^*}{2}}}$. Let us define a sequence of points $\{x_i \}_{i=0 \dots S}$ as follows: $t_0=0$, and
\begin{displaymath} 
t_i= \mathrm{max} \{t\in (0,1]  \text{  such that  } |\gamma(t)- x_i| = \frac{\gamma_0 \rho \vartheta^*}{2} \} \; \text{, if } |x_i-x| >\frac{\gamma_0 \rho \vartheta^*}{2}, 
\end{displaymath}
otherwise, let $i=S$ and the process is stopped. Here $\vartheta^*$ is the constant given in Theorem \ref{teotresfere}. All the balls $B_{\frac{\gamma_0 \rho \vartheta^*}{4}}(x_i)$ are pairwise disjoint, the distance between centers $| x_{i+1}-x_i | =\frac{\gamma_0 \rho \vartheta^*}{2}$ for all $i=1 \dots S-1$ and for the last point, $|x_S - x| \le \frac{\gamma_0 \rho \vartheta^*}{2}$. The number of spheres is bounded by  
\begin{displaymath} S\le C \Big( \frac{\rho_0}{\rho} \Big)^n \end{displaymath} where $C$ only depends on $\alpha$, $M_0$ and $M_1$. For every $\rho \le \overline{\rho}$, we have that, letting 
\begin{displaymath} \rho_1 = \frac{\gamma_0 \rho \vartheta^*}{4},\;  \rho_2= \frac{3 \gamma_0 \rho \vartheta^*}{4}, \;  \rho_3={\gamma_0 \rho \vartheta^*} 
\end{displaymath}
an iteration of the three spheres inequality on a chain of spheres leads to 
\begin{equation} \label{iteratresfere}  \int_{B_{\rho_2} (x)} \! | w|^2 dx \le C \Big(\int_G  \! | w|^2 dx \Big)^{1-\delta^S}  \Big(\int_{B_{\rho_3}(x_0)} \! | w |^2 dx \Big)^{\delta^S} \end{equation}
where $0<\delta<1$ and $C>0$ only depend on $M_0$ and $\alpha$. From our choice of $\bar{\rho}$ and $\vartheta^*$, it follows that $B_{\frac{\gamma_0 \rho \vartheta^*}{4}}(x_0) \subset B_{\rho^*}(x_0) \subset G \cap B_{\frac{3 \rho_1 }{4}}(P^*)$, where we follow the notations from Theorem \ref{stabilitycauchy}. We can therefore apply Theorem \ref{stabilitycauchy}. Let us call 
\begin{equation}  \label{epsilontilde}
\tilde{\epsilon} = \frac{ \epsilon}{ \norma{g}{\frac{1}{2}}{\Gamma} }.
\end{equation}
Using (\ref{NseHomDirEqn}), (\ref{stimau}) and (\ref{HpPiccolo}) on (\ref{iteratresfere}) we then have:
\begin{equation} \label{pallina}
 \int_{B_{\rho_2}(x)} \! | w|^2 dx \le C \rho_0^{n-2} \norma{g}{\frac{1}{2}}{\Gamma}^2 \tilde{\epsilon}^{2 \tau \delta^S}.
\end{equation}
The following interpolation inequality holds for all functions $v$ defined on the ball $B_t(x) \subset \mathbb{R}^n$:
\begin{equation} \label{interpolation}
\|v \|_{\mathbf{L}^\infty (B_t(x))} \le C \Big( \Big(  \int_{B_t(x)} | v|^2 \Big)^{\frac{1}{n+2}} |\nabla v|^{\frac{n}{n+2}}_{\mathbf{L}^\infty (B_t(x))} +   \frac{1}{t^{n/2}} \Big( \int_{B_t(x)} | v|^2 \Big)^{\frac{1}{2}} \Big)
\end{equation}
We apply it to $w$ in $B_{\rho_2}(x)$, using (\ref{pallina}) and (\ref{schauder1}) we obtain
\begin{equation} \label{stimaw}
\| w \|_{\mathbf{L}^\infty (B_{\rho_2}(x))} \le C \Big( \frac{\rho_0}{\rho} \Big)^{\frac{n}{2}}  \norma{g}{\frac{1}{2}}{\Omega} \tilde{\epsilon}^{\gamma  \delta^S},
\end{equation}
where $\gamma=\frac{2\tau}{n+2}$. 
Finally, from (\ref{stimaw}) and (\ref{sommandi2}) we get:
\begin{equation} \label{sommandi3} 
\int_{D_2\setminus D_1} |\nabla u_1|^2  \le C \rho_0^{n-2} \norma{g}{\frac{1}{2}}{\Gamma}^2 \Big( \frac{\rho}{\rho_0} + \Big( \frac{\rho_0}{\rho} \Big)^{\frac{n}{2}} \tilde{\epsilon}^{\gamma \delta^S}  \Big)
\end{equation}
Now call $$\til{\mu}=\exp \Big( -\frac{1}{\gamma} \exp \Big(\frac{2S \log \delta}{\theta^n}\Big)\Big) $$ and $\overline{\mu}= \min \{ \til{\mu}, \exp(-\gamma^2) \}.$
Choose $\rho$ depending upon $\tilde{\epsilon}$ of the form 
\begin{displaymath}
 \rho(\tilde{\epsilon}) = \rho_0 \Bigg( \frac{2S \log |\delta|}{\log |\log \tilde{\epsilon}^\gamma|} \Bigg)^{-\frac{1}{n}}.
\end{displaymath}
We have that $\rho$ is defined and increasing in the interval $(0, e^{-1})$, and by definition $\rho(\overline{\mu}) \le \rho(\til{\mu}) = \theta \rho= \overline{\rho}$, we are able to apply (\ref{sommandi3}) to (\ref{652}) with $\rho=\rho(\til{\epsilon})$ to obtain 
\begin{equation}
\label{quasifinito}
\int_{D_2 \setminus D_1} |\nabla u_1|^2 \le C \rho_0^{n-2} \norma{g}{\frac{1}{2}}{\Gamma}^2 \log |\log \til{\epsilon}|^\gamma,
\end{equation}
and since $\til{\epsilon} \le \exp(-\gamma^2)$  it is elementary to prove that \begin{displaymath}
\log |\log {\til{\epsilon}^\gamma}| \ge \frac{1}{2} \log | \log \til{\epsilon}|,
\end{displaymath}
so that (\ref{quasifinito}) finally reads 
\begin{displaymath}
\int_{D_2 \setminus D_1} |\nabla u_1|^2 \le C \rho_0^{n-2} \norma{g}{\frac{1}{2}}{\Gamma}^2 \,\omega(\til{\epsilon}),
\end{displaymath}
with  $\omega(t) = \log |\log t|^{\frac{1}{n}}$ defined for all $0<t<e^{-1}$, and $C$ depends on $M_0$, $M_1$ and $\alpha$.
\end{proof}
\begin{proof}[Proof of Proposition \ref{teostabestimpr}]
We will prove the thesis for $u_1$, the case $u_2$ being completely analogous. 
First of all, we observe that
\begin{equation} \label{sommandiB}
\int_{D_2 \setminus D_1} |\nabla u_1|^2 \le \int_{\Omega_1 \setminus G} |\nabla u_1|^2 =\int_{\partial (\Omega_1 \setminus G)} (\nabla u_1 \cdot \nu) u_1 + \int_{\partial (\Omega_1 \setminus G)} p_1 ( u_1 \cdot \nu)  
\end{equation}
and that 
\begin{equation*}
\partial (\Omega_1 \setminus G) \subset \partial D_1 \cup (\partial D_2 \cap \partial G)
\end{equation*}
and recalling the no-slip condition, applying to (\ref{sommandiB}) computations similar to those in (\ref{652}), (\ref{6.53}), we have
\begin{equation*} \begin{split}
& \int_{D_2 \setminus D_1} |\nabla u_1|^2  \le \int_{\partial D_2 \cap \partial G} (\nabla u_1 \cdot \nu) w + \int_{\partial D_2 \cap \partial G} p_1 ( w \cdot \nu) \le \\ \le & C \rho_0^{n-2}\norma{g}{\frac{1}{2}}{\Gamma} \max_{\partial D_2 \cap \partial G} |w|,
\end{split} \end{equation*}
where again $w= u_1 - u_2$ and $C$ only depends on $\alpha$, $M_0$ and $M_1$.
Take a point $z \in \partial G$. By the regularity assumptions on $\partial G$,   we find a direction $\xi \in \mathbb{R}^n$, with $|\xi|=1$, such that the cone (recalling the notations used during the proof of Proposition \ref{teoPOS}) $C(z, \xi, \vartheta_0) \cap B_{\rho_0} (z) \subset G$, where $\vartheta_0 =\arctan \frac{1}{M_0}$. Again (\cite[Proposition 5.5]{ARRV}) $G_\rho$ is connected for $\rho \le \frac{\rho_0 h_0 }{3}$ with $h_0$ only depending on $M_0$. Now set
\begin{equation*}\begin{split}
\lambda_1 &= \min \Big\{ \frac{\tilde{\rho}_0}{1+\sin \vartheta_0}, \frac{\tilde{\rho}_0}{3\sin \vartheta_0}, \frac{\tilde{\rho}_0}{16(1+M_0^2)\sin \vartheta_0} \frac{}{} \Big\}, \\ 
\vartheta_1 & = \arcsin\Big(\frac{\sin \vartheta_0}{4} \Big), \\
w_1 &=z+ \lambda_1 \xi, \\ 
\rho_1 &= \vartheta^* h_0 \lambda_1 \sin \vartheta_1. 
\end{split}\end{equation*}
where $0<\vartheta^*\le 1$ was introduced in Theorem \ref{teotresfere}.
By construction, $B_{\rho_1}(w_1) \subset C(z, \xi, \vartheta_1) \cap B_{\tilde{\rho}_0}(z)$ and $B_{\frac{4 \rho_1}{\vartheta^*}}(w_1) \subset C(z, \xi, \vartheta_0) \cap B_{\tilde{\rho}_0}(z) \subset G$. Furthermore $\frac{4 \rho_1}{\vartheta^*} \le  \rho^*$, hence $B_{\frac{4 \rho_1}{\vartheta^*}} \subset G$, where $\rho^*$ and $x_0$ were defined by (\ref{rhostar}) and (\ref{zetazero}) respectively, during the previous proof. Therefore, $w_1$, $x_0 \in \overline{G_{\frac{4\rho_1}{\vartheta^*}}}$, which is connected by construction.
Iterating the three spheres inequality (mimicking the construction made in the previous proof) 
\begin{equation} \label{iteratresferei}  \int_{B_{\rho_1} (w_1)} \! | w|^2 dx \le C \Big(\int_G  \! | w|^2 dx \Big)^{1-\delta^S}  \Big(\int_{B_{\rho_1 }(x_0)} \! | w |^2 dx \Big)^{\delta^S} \end{equation}
where $0<\delta<1$ and $C \ge 1$ depend only on $n$, and $S \le \frac{M_1 \rho_0^n}{\omega_n \rho_1^n}$. 
Again, since $B_{\rho^*}(x_0) \subset G \cap B_{\frac{3}{8}\rho_1}(P_0)$, we apply Theorem \ref{stabilitycauchy} which leads to 
\begin{equation}
\int_{B_{\rho_1}(w_1)} |w|^2 \le C \rho_0^n \norma{g}{\frac{1}{2}}{\Gamma}^2 \tilde{\epsilon}^{2\beta},
\end{equation}
where $0<\beta<1$ and $C \ge 1$ only depend on $\alpha$, $M_0$, and $\frac{\tilde{\rho}_0}{\rho_0}$ and $\tilde{\epsilon}$ was defined in (\ref{epsilontilde}).
So far the estimate we have is only on a ball centered in $w_1$, we need to approach $z \in \partial G$ using a sequence of balls, all contained in $C(z, \xi, \vartheta_1)$, by suitably shrinking their radii. Take 
\begin{equation*}
\chi = \frac{1-\sin \vartheta_1}{1+\sin\vartheta_1}
\end{equation*}
and define, for $k \ge 2$,
\begin{equation*} \begin{split}
\lambda_k&=\chi \lambda_{k-1}, \\
\rho_k&= \chi \rho_{k-1}, \\
w_k &= z + \lambda_k \xi. \\
\end{split}\end{equation*}
With these choices, $\lambda_k= \lambda \chi^{k-1} \lambda_1$, $\rho_k=\chi^{k-1} \rho_1$ and $B_{\rho_{k+1}}(w_{k+1}) \subset B_{3\rho_k}(w_k)$, $B_{\frac{4}{\vartheta^*}\rho_k}(w_k) \subset C(z, \xi, \vartheta_0) \cap B_{\tilde{\rho}_0}(z) \subset G$.
Denote by
\begin{displaymath}
d(k)= |w_k-z|-\rho_k,
\end{displaymath}
we also have
\begin{displaymath}
d(k)= \chi^{k-1}d(1),
\end{displaymath}
with
\begin{displaymath}
d(1)= \lambda_1(1-\vartheta^* \sin \vartheta_1).
\end{displaymath}
Now take any $\rho \le d(1)$ and let $k=k(\rho)$ the smallest integer such that $d(k) \le \rho$, explicitly
\begin{equation} \label{chirho}
\frac{\big|\log \frac{\rho}{d(1)}\big|}{\log \chi} \le k(\rho)-1 \le \frac{| \log \frac{\rho} {d(1)}|}{\log \chi}+1.
\end{equation}
We iterate the three spheres inequality over the chain of balls centered in $w_j$ and radii $\rho_j$, $3 \rho_j$, $4\rho_j$, for $j=1, \dots, k(\rho)-1$, which yields
\begin{equation} \label{iteratresferetre}
\int_{B_{\rho_{k(\rho)}}(w_{k(\rho)})} |w|^2 \le C \norma{g}{\frac{1}{2}}{\Gamma}^2 \rho^n \tilde{\epsilon}^{2 \beta \delta^{k(\rho)-1}},
\end{equation}
with $C$ only depending on $\alpha$, $M_0$ and $\frac{\tilde{\rho}_0}{\rho_0}$.
Using the interpolation inequality (\ref{interpolation}) and (\ref{schauder2}) we obtain 
\begin{equation}\label{543}
\|w \|_{\mathbf{L}^\infty (B_{\rho_{k(\rho)}}(w_{k(\rho)}))} \le C \norma{g}{\frac{1}{2}}{\Gamma} \frac{\tilde{\epsilon}^{\beta_1 \delta^{k(\rho)-1}}}{\chi^{\frac{n}{2}(k(\rho)-1)}},
\end{equation}
where $\beta_1=\frac{2 \beta}{n+2}$ depends only on $\alpha$, $M_0$, $M_1$ and $\frac{\tilde{\rho}_0}{\rho_0}$.
From (\ref{543}) and (\ref{schauder2}) we obtain 
\begin{equation} \label{544}
|w(z) | \le C \norma{g}{\frac{1}{2}}{\Gamma} \Bigg( \frac{\rho}{\rho_0} +\frac{\tilde{\epsilon}^{\beta_1 \delta^{k(\rho)-1}}}{\chi^{\frac{n}{2}(k(\rho)-1)}} \Bigg),
\end{equation}
Finally, call
\begin{displaymath}
\rho(\tilde{\epsilon})= d(1) |\log \tilde{\epsilon}^{\beta_1}|^{-B},
\end{displaymath}
with
\begin{displaymath}
B= \frac{|\log \chi|}{2 \log |\delta|}.
\end{displaymath}
and let $\tilde{\mu} = \exp(-\beta_1^{-1})$. We have that $\rho(\tilde{\epsilon})$ is monotone increasing in the interval $0<\tilde{\epsilon} < \tilde{\mu}$, and $\rho(\tilde{\mu})=d(1)$, so $\rho(\tilde{\epsilon}) \le d(1)$ there.  Putting $\rho=\rho(\tilde{\epsilon})$ into (\ref{544}) we obtain
\begin{equation}
\int_{D_2 \setminus D_1} |\nabla u_1|^2 \le C \rho_0^{n-2}\norma{g}{\frac{1}{2}}{\Gamma}^2 |\log \tilde{\epsilon}|^{-B},
\end{equation}
where $C$ only depends on $\alpha$, $M_0$ and $\frac{\til{\rho}_0}{\rho_0}$.
\end{proof}

\section{Proof of Theorem \ref{stabilitycauchy}. }
As already premised, in order to prove Theorem \ref{stabilitycauchy}, we will need to perform an extension argument on the solution to (\ref{NSE}) we wish to estimate. This has been done for solutions to scalar elliptic equations with sufficiently smooth coefficients (\cite{Isa}). Here, however, we are dealing with a system: extending $u$ implies finding a suitable extension for the pressure $p$ as well; moreover, both extensions should preserve some regularity they inherit from the original functions. 
Following the notations given for Theorem \ref{stabilitycauchy} we define  $$Q(P_0) = B^\prime_{\rho_{00}} (0) \times \Big[-\frac{M_0\rho_0^2}{\sqrt{1+M_0^2}}, \frac{M_0\rho_0^2}{\sqrt{1+M_0^2}}\Big].$$  
We have: 
\begin{equation}\label{gagrafico} \begin{split}
\Gamma_0 &= \partial E \cap Q(P_0). \\ 
\end{split}\end{equation}
We then call $E^- = Q(P_0) \setminus  E$ and $\til{E} = E \cup E^- \cup \Gamma_0$.
\begin{lemma}[Extension] \label{teoextensionNSE}
Suppose the hypotheses of Theorem \ref{stabilitycauchy} hold.  Consider the domains $E^-$, $\til{E}$ as constructed above.  Take, furthermore, $g \in \accan{\frac{5}{2}}{\partial E}$. Let $(u,p)$ be the solution to the following problem:
\begin{equation}
  \label{NseHomDirExt} \left\{ \begin{array}{rl}
    \dive \sigma(u,p) & = 0 \hspace{2em} \mathrm{\tmop{in}} \hspace{1em}
    E,\\
    \dive u & = 0 \hspace{2em} \mathrm{\tmop{in}} \hspace{1em} E,\\
    u & = g \hspace{2em} \mathrm{\tmop{on}} \hspace{1em} \Gamma,\\
    \sigma (u, p) \cdot \nu & = \psi \hspace{2em} \mathrm{\tmop{on}}
    \hspace{1em} \Gamma,\\  \end{array} \right.
\end{equation}
Then there exist functions $\tilde{u} \in \accan{1}{\til{E}}$,  $\til{p} \in L^2(\til{E})$ and a functional $\Phi \in \accan{-1}{\til{E}}$ such that $\tilde{u} = u$,  $\tilde{p} = p$ in $E$ and $(\til{u}, \til{p})$ solve the following:
\begin{equation} \begin{split} \label{sistilde}
\triangle \til{u} + \nabla \til{p} &= \Phi \, \, \text{  in  } \, \, \til{E},  \\ \dive \til{u}&=0  \, \, \text{  in  } \, \, \til{E}.
\end{split}
\end{equation}
If
\[ \norma{g}{\frac{1}{2}}{\Gamma}+ \rho_0\norma{\psi}{-\frac{1}{2} }{\Gamma} = \eta , \] 
then we have 
\begin{equation} \label{stimaPhi}
\norma{\Phi}{-1}{\til{E}} \le C\frac{\eta}{\rho_0}.
\end{equation}
where $C>0$ only depends on $\alpha$ and $M_0$. 
\end{lemma}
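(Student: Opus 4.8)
The plan is to build the extension by solving an auxiliary Stokes problem in the added cap $E^-$ whose Dirichlet datum matches $u$ across the interface $\Gamma_0$; gluing the two solutions then produces an $\accan{1}{\til E}$ field whose only failure to solve the homogeneous system is a single layer supported on $\Gamma_0$ carried by the mismatch of the normal stress. Since the Dirichlet trace is matched exactly, this mismatch is measured by the Neumann-type datum $\psi$ (which is known) together with the stress of the auxiliary solution (which is controlled by its energy), and this is precisely the combination appearing in $\eta$.

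Concretely, first I would extend $g$ from $\Gamma_0$ to a datum $\hat g$ on all of $\partial E^-$, with $\norma{\hat g}{\frac12}{\partial E^-} \le C \norma{g}{\frac12}{\Gamma_0}$ and satisfying the flux-compatibility condition $\int_{\partial E^-} \hat g \cdot \nu = 0$ needed for solvability of the Stokes Dirichlet problem; the correction restoring zero flux can be placed on $\partial E^- \setminus \Gamma_0$, away from the interface, at the cost of a controlled constant. Then I would invoke Theorem \ref{TeoRegGen} to produce $(u^-, p^-)$ solving $\dive\sigma(u^-,p^-)=0$, $\dive u^- = 0$ in $E^-$ with $u^- = \hat g$ on $\partial E^-$, together with the energy bound $\norma{u^-}{1}{E^-} + \rho_0 \normadue{p^- - p^-_{E^-}}{E^-} \le C \norma{g}{\frac12}{\Gamma_0}$. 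Setting $\til u = u$, $\til p = p$ in $E$ and $\til u = u^-$, $\til p = p^-$ in $E^-$, the matching $u = u^- = g$ on $\Gamma_0$ gives $\til u \in \accan{1}{\til E}$, and equality of the full traces (in particular of the normal components) forces $\dive \til u = 0$ in $\til E$ with no singular part, while $\til p \in L^2(\til E)$ is immediate.

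Next I would identify $\Phi$. Testing $\dive\sigma(\til u, \til p)$ against $\phi \in C_c^\infty(\til E)$ and integrating by parts on each side of $\Gamma_0$, the interior terms vanish because both $(u,p)$ and $(u^-,p^-)$ solve the homogeneous system, and the two interface contributions combine, using $\sigma(u,p)\cdot\nu = \psi$ on $\Gamma_0$, into the single layer $\Phi = -\big(\psi - \sigma(u^-,p^-)\cdot\nu\big)\,\delta_{\Gamma_0}$, so that $(\til u,\til p)$ solves (\ref{sistilde}) with this $\Phi$. This is the step that makes the stress formulation essential: the pressure jump across $\Gamma_0$ is automatically absorbed into the normal stress $\sigma\cdot\nu$, so no separate, uncontrollable $\nabla p$ single layer survives, which is also why the construction is tied to assigning $g$ and measuring $\psi$ rather than the reverse.

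Finally, for the estimate I would use that a single layer $h\,\delta_{\Gamma_0}$ satisfies $\norma{h\,\delta_{\Gamma_0}}{-1}{\til E} \le C \norma{h}{-\frac12}{\Gamma_0}$ by the trace theorem $\accan{1}{\til E} \to \accan{\frac12}{\Gamma_0}$, so $\norma{\Phi}{-1}{\til E} \le C\big(\norma{\psi}{-\frac12}{\Gamma_0} + \norma{\sigma(u^-,p^-)\cdot\nu}{-\frac12}{\Gamma_0}\big)$. The first term is part of $\eta/\rho_0$. For the second I would use the conormal-trace bound for Stokes solutions: since $\dive\sigma(u^-,p^-)=0 \in \elledue{E^-}$ with $u^-\in\accan{1}{E^-}$ and $p^-\in L^2(E^-)$, the normal stress is well defined in $\accan{-\frac12}{\partial E^-}$ with $\norma{\sigma(u^-,p^-)\cdot\nu}{-\frac12}{\partial E^-} \le C(\norma{u^-}{1}{E^-} + \rho_0\normadue{p^--p^-_{E^-}}{E^-})$, and then the energy bound above gives $\le C\norma{g}{\frac12}{\Gamma_0}$. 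Collecting the two pieces and tracking the $\rho_0$-normalization yields $\norma{\Phi}{-1}{\til E} \le C\eta/\rho_0$. The hard part will be the regularity and, above all, the scale-invariant conormal estimate on the non-smooth domain $E^-$, which has corners where $\partial Q(P_0)$ meets $\Gamma_0$: one must ensure that the constants in both Theorem \ref{TeoRegGen} and the $H^{-1/2}$ stress bound depend only on $M_0$ and $\alpha$. This is exactly why the higher regularity $g \in \accan{\frac52}{\partial E}$ is assumed, so that $\psi$ and all traces used in the integration by parts are genuinely defined before one passes to the weaker norms in the final estimate.
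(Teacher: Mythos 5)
Your construction is correct, but it is genuinely different from the one in the paper. You solve an actual Stokes Dirichlet problem in the cap $E^-$, so that the glued pair fails to solve the homogeneous system only through a single layer on $\Gamma_0$ carried by the jump of the normal stress, which you then estimate by $\norma{\psi}{-\frac{1}{2}}{\Gamma}$ plus the weakly defined conormal trace of the auxiliary solution. The paper instead never solves a Stokes system in $E^-$: it takes an arbitrary divergence-free $\accan{3}{E^-}$ extension $u^-$ of $g$ (via the Mitrea/Bedivan--Fix extension theorem), sets $F^-=\triangle u^-$, and manufactures the pressure by solving the scalar Poisson problem $\triangle p^- = \dive F^-$ with $p^-=0$ on $\partial E^-$, so that the residual $X^- = F^- - \nabla p^-$ is divergence free and $\mathbf{L}^2$-controlled; the resulting $\Phi$ then splits into two surface layers on $\Gamma$ (one from $\psi$, one from $(\nabla u^- + (\nabla u^-)^T)\cdot\nu$, the pressure dropping out because $p^-$ vanishes on the boundary) plus a volume term $-\int_{E^-}X^-\cdot v$. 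The trade-off is instructive: the paper's route only needs scalar elliptic theory and a divergence-free extension, but it pays for this with the hypothesis $g\in\accan{\frac{5}{2}}{\partial E}$, which is what makes $F^-=\triangle u^-$ and the pointwise trace of $\nabla u^-$ on $\Gamma$ meaningful in the estimate of $\Phi_2$; your route works entirely at the $\accan{1}{}\times L^2$ level (the conormal trace being defined through the weak formulation), so the extra regularity of $g$ is not actually essential for your argument, contrary to the rationale you give in your last sentence. What your route requires instead, and what you correctly flag as the delicate point, is uniform solvability of the Stokes problem (an inf-sup constant) on the cornered Lipschitz domain $E^-$ with constants depending only on $M_0$; this is standard but must be asserted, whereas the paper's scalar Dirichlet problem avoids it entirely. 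Both arguments yield the same structure of $\Phi$ for the purposes of (\ref{stimaPhi}), so your proposal is an acceptable alternative proof.
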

\begin{proof}
From the assumptions we made on the boundary data and the domain, it follows that $(u, p) \in \accan{3}{E} \times L^2(E) $.
We can find (see \cite{MITREA} or \cite{BedFix}) a function $u^- \in \accan{3}{E^-}$  such that
\begin{equation} \label{propumeno} \begin{split} \dive u^-=0 \quad \mathrm{in}\quad E^-,\qquad u^-=g \quad\mathrm{on} \quad \Gamma,\\  \norma{u^-}{3}{E^-} \le C \norma{g}{\frac{1}{2}}{\Gamma},
 \end{split} \end{equation}
with $C$ only depending on $|E|$. 
We now call 
\begin{displaymath} F^-= \triangle u^-,\end{displaymath} 
by our assumptions we have $ F^- \in \accan{1}{E^-}$.
Let $p^- \in H^1(E^-)$ be the weak solution to the following Dirichlet problem:\begin{equation}\label{pmeno} \left\{ \begin{array}{rl}
    \triangle p^- - \dive F^- &=0 \hspace{2em} \mathrm{\tmop{in}} \hspace{1em} E^-,\\
    p^- & = 0  \hspace{1em}
    \hspace{0.50em}  \mathrm{\tmop{on}} \hspace{1em}
    \partial E^-.\\
    \end{array} \right.
\end{equation}
We now define \begin{equation}\label{effestesa} X^-= F^- -\nabla p^-. \end{equation} This field is divergence free by construction,
and its norm is controlled by \begin{equation} \label{stimaX} \|X^- \|_{\elledue{E^-}} \le C  \norma{g}{\frac{1}{2}}{\Gamma} \end{equation}
We thus extend $(u,p)$ as follows:
\begin{displaymath}  \til{u}= \left\{ \begin{array}{rl} & u \quad \text{ in } \; \; E, \\ & u^- \quad \text{ in } \; \; E^-,\end{array} \right.\end{displaymath} \begin{displaymath}\til{p}= \left\{ \begin{array}{rl} &  p \quad \text{ in } \; E, \\ & p^- \quad \text{ in } \; E^-. \end{array} \right. \end{displaymath}
We now investigate the properties of the thus built extension $(\til{u},\til{p})$. Take any $v \in \accano{1}{\til{E}}$, we have
\begin{equation}
  \label{NSEEXT} \begin{split} &\int_{\til{E}} (\nabla \til{u} +(\nabla \til{u})^T - \til{p} \ide  ) \cdot \nabla v = \\ =& \int_{E} (\nabla u +(\nabla u )^T - p \ide ) \cdot \nabla v + \int_{E^-} (\nabla u^- +(\nabla u^-)^T - p^- \ide  ) \cdot \nabla v. \end{split}\end{equation}
About the first term, using (\ref{NSE}) and the divergence theorem we obtain 
\begin{equation}
  \label{phiuno}  \int_{E} (\nabla u +(\nabla u)^T - p \ide ) \cdot \nabla v = \int_{\Gamma} \psi \cdot v. \end{equation}
Define $ \Phi_1(v)= \int_{\Gamma} \psi \cdot v $ for all  $v \in \accano{1}{\til{E}}$.
Using the decomposition made in (\ref{effestesa}) on the second term, we have 
\begin{equation}\label{phiduetre} \begin{split}  & \int_{E^-} (\nabla u^- +(\nabla u^- )^T - p^- \ide  ) \cdot \nabla v = \\ =& \int_{\Gamma}(\nabla u^- +(\nabla u^- )^T - p^- \ide ) \cdot \nu \, v -\int_{E^-} \dive  \big( \nabla u^- +(\nabla u^- )^T - p^-\ide  \big) \cdot  v= \\=& \int_{\Gamma}(\nabla u^- +(\nabla u^- )^T ) \cdot \nu \, v -\int_{E^-} (\triangle u^- -\nabla p^- )  \cdot  v= \\ =&\int_{\Gamma}(\nabla u^- +(\nabla u^- )^T ) \cdot \nu \, v -\int_{E^-} X^-  \cdot  v = \Phi_2(v)+\Phi_3(v), \end{split}\end{equation}
where we define for all $v \in \accano{1}{\til{E}}$ the functionals
\begin{displaymath}
 \begin{split}
\Phi_2(v)&=\int_{\Gamma}(\nabla u^- +(\nabla u^- )^T ) \cdot \nu \, v, \\
\Phi_3(v)&=-\int_{E^-} X^-  \cdot  v 
\end{split}
\end{displaymath}
We can estimate each of the linear functionals $\Phi_1$, $\Phi_2$ and $\Phi_3$ easily, for we have (by (\ref{phiuno}) and the trace theorem):
\begin{equation} \label{stimaphi1} \big| \Phi_1(v) \big| \le \norma{\psi}{-\frac{1}{2}}{\Gamma} \norma{v}{\frac{1}{2}}{\Gamma} \le C  \rho_0\norma{\psi}{-\frac{1}{2}}{\Gamma} \norma{v}{1}{E^-}, \end{equation}
moreover (using (\ref{phiduetre}) and (\ref{propumeno}) )
\begin{equation} \label{stimaphi2} \big| \Phi_2(v) \big| \le {\| \nabla u \|_{{\bf L}^2(\Gamma)}} {\|v \|_{{\bf L}^2(\Gamma)}} \le C \norma{g}{\frac{1}{2}}{\Gamma} \norma{v}{1}{E^-}, \end{equation}
and, at last, by (\ref{stimaX}), 
\begin{equation} \label{stimaphi3} \big| \Phi_3(v) \big| \le \|X^- \|_{\elledue{E^-}} \|v \|_{\elledue{E^-}} \le C \norma{g}{\frac{1}{2}}{\Gamma} \norma{v}{1}{E^-}. \end{equation}
  Then, defining $\Phi(v)=\Phi_1(v) + \Phi_2(v) + \Phi_3(v)$  for all $v \in \accano{1}{\til{E}}$, putting together (\ref{phiuno}), (\ref{phiduetre}), (\ref{stimaphi1}), (\ref{stimaphi2}) and (\ref{stimaphi3}), we have (\ref{stimaPhi}). 
\end{proof}

\begin{proof}[Proof of Theorem \ref{stabilitycauchy}. ]
Consider the domain $\til{E}$ built at the beginning of this section, and take $\til{u}$ the extension of $u$ built according to Theorem \ref{teoextensionNSE}. By linearity, we may write $\til{u}= u_0+w$  where $(w,q)$ solves
\begin{equation}  \label{NSEPARTIC} 
\dive \sigma (w, q)  = \til{\Phi} \hspace{2em} \mathrm{\tmop{in}}
\hspace{1em} \til{E}, \end{equation}
and $w \in \accano{1}{\til{E}}$, whereas $(u_0, p_0)$ solves
\begin{equation}  \label{NSEHOM} 
\left\{ \begin{array}{rl}
    \dive \sigma (u_0, p_0) &= 0 \hspace{2em} \mathrm{\tmop{in}} \hspace{1em} \til{E}, \\ 
    u_0 & = 0 \hspace{2em} \mathrm{\tmop{on}} \hspace{1em} \Gamma,\\
    \sigma (u_0, p_0) \cdot \nu & = \psi \hspace{2em} \mathrm{\tmop{on}}
    \hspace{1em} \Gamma.
  \end{array} \right.
  \end{equation}
Using well known results about interior regularity of solutions to strongly elliptic equations 
\begin{equation} 
 \| u_0 \|_{{\bf L}^\infty( B_{\frac{t}{2}} (x))} \le t^{-\frac{n}{2}} \normadue{u_0}{B_{\frac{t}{2}}(x)}.
\end{equation}
It is then sufficient to estimate $\normadue{u}{B(x)}$ for a "large enough" ball near the boundary. 
Since (see the proof of Proposition \ref{teoPOS})  $\triangle^2 u_0=0$, we  may apply Theorem \ref{teotresfere} to $u_0$. Calling $r_1= \frac{\rho_{00}}{8}$, $r_2= \frac{3 \rho_{00}}{8}$  and $r_3= \rho_{00}$  we have (understanding that all balls are centered in $P^*$) 
\begin{equation} \label{3sfereu0} 
\normadue{u_0}{B_{r_2}} \le C \normadue{u_0}{B_{r_1}}^{\tau} \normadue{u_0}{B_{r_3}}^{1-\tau}.
\end{equation}
Let us call $\eta=\rho_0\norma{\psi}{-\frac{1}{2}}{\Gamma}$.
By the triangle inequality, (\ref{propumeno}) and (\ref{stimau}) we have that
\begin{equation} \label{trin1}
\normadue{u_0}{B_{r}} \le \normadue{\til{u}}{B_{r}}+\normadue{w}{B_{r}} \le \normadue{\til{u}}{B_{r}}  + C \eta, 
\end{equation}
for $r=r_1,r_3$; furthermore, we have 
\begin{equation} \label{trin2}
\normadue{\til{u}}{B_{r_2}} \le \normadue{u_0}{B_{r_2}}+\normadue{w}{B_{r_2}} \le \normadue{u_0}{B_{r_2}}  + C \eta. 
\end{equation}
Putting together (\ref{3sfereu0}), (\ref{trin1}), (\ref{trin2}), and recalling (\ref{stimau}) and (\ref{stimanormadiretto}) we get
\begin{equation} \begin{split} \label{3sfere2}
& \normadue{u}{B_{r_2}} \le \normadue{\til{u}}{B_{r_2} \cap E} \le \\  \le & C \eta + C (\normadue{\til{u}}{B_{r_1}}+ C \eta)^{\tau}  (\normadue{\til{u}}{B_{r_3} \cap E} + C \eta )^{1-\tau} \le \\ \le & C \big( \eta + \eta^\tau (\eta + \normadue{u}{E} )^{1-\tau} \big) \le C \eta^\tau \normadue{u}{E}^{1-\tau}.  \end{split} \end{equation} 
\end{proof}

\end{document}